\title{Banach Spaces from Barriers in High Dimensional
                                             Ellentuck Spaces}
\author{A. Arias}
\address{Department of Mathematics\\
  University of Denver \\
  C.M. Knudson Hall, Room 300
2390 S. York St.\\ Denver, CO \ 80208 U.S.A.}  
\email{aarias@du.edu}
\urladdr{http://cs.du.edu/~aarias}
\author{N.
  Dobrinen}
\address{Department of Mathematics\\
  University of Denver \\
   C.M. Knudson Hall, Room 300
2390 S. York St. \\ Denver, CO \ 80208 U.S.A.}  
\email{natasha.dobrinen@du.edu}
\urladdr{\url{http://web.cs.du.edu/~ndobrine}} 
\thanks{Dobrinen was partially  supported by  National Science Foundation Grants DMS-1301665 and DMS-1600781}
\author{G. Gir\'{o}n-Garnica}
\address{Department of Mathematics\\
  University of Denver \\
  C.M. Knudson Hall, Room 300
2390 S. York St.\\ Denver, CO \ 80208 U.S.A.}  
\email{ggirngar@du.edu}
\urladdr{http://www.gabrielgiron.com}
\author{J. G. Mijares}
\address{Department of Mathematical and Statistical Sciences\\
 University of Colorado Denver  \\
   Student Commons Building 
1201 Larimer St. \\ Denver, CO \ 80217 U.S.A.}  
\email{jose.mijarespalacios@ucdenver.edu}
\urladdr{http://math.ucdenver.edu/~mijaresj/}
\begin{document} 


\begin{abstract}
A new   hierarchy of  Banach spaces $T_k(d,\theta)$, $k$ any positive integer, is constructed using barriers in high dimensional Ellentuck spaces \cite{DobrinenJSL15}
following the classical framework under which a Tsirelson type norm is defined from a barrier in the
Ellentuck space \cite{Argyros/TodorcevicBK}. 
The following structural properties of these spaces are proved.
Each of  these spaces contains arbitrarily large
copies of $\ell_\infty^n$, with the bound constant for all $n$.
For each fixed pair $d$ and $\theta$,
the spaces $T_k(d,\theta)$, $k\ge 1$,
are $\ell_p$-saturated, 
forming natural extensions of the $\ell_p$ space,
where $p$ satisfies $d\theta=d^{1/p}$.
Moreover,
they  form  a strict hierarchy over the $\ell_p$ space:
For any $j<k$,
the space $T_j(d,\theta)$ embeds isometrically into  
$T_k(d,\theta)$ as a subspace which is non-isomorphic to $T_k(d,\theta)$.
\end{abstract}

\maketitle

\section{Introduction}

Banach space theory is rich with applications  of   fronts and barriers within the framework of the
Ellentuck space (see for example \cite{Diestel}, \cite{Odell}, and Part B of \cite{Argyros/TodorcevicBK}). 
Infinite-dimensional Ramsey theory is a branch of Ramsey Theory initiated by Nash-Williams in the
course of developing his theory of better-quasi-ordered sets in the early 1960's. This theory
introduced the notions of fronts and barriers that turned out to be  important in the context
of Tsirelson type norms. 
During the 1970's, Nash-Williams' theory was reformulated and strengthened
by the work of 
Silver \cite{Silver71},
Galvin and Prikry \cite{Galvin/Prikry73},  Louveau \cite{Louveau74},  and Mathias  \cite{Mathias77},   culminating in  Ellentuck's work in \cite{Ellentuck74} introducing topological
Ramsey theory on what  is now called the Ellentuck space.

Building on  work of Carlson and Simpson in \cite{Carlson/Simpson90}, Todorcevic  distilled key properties of the Ellentuck space into four axioms which
guarantee that a topological space satisfies infinite-dimensional Ramsey theory  analogously to \cite{Ellentuck74}
(see Chapter 5 of \cite{TodorcevicBK10}).
These abstractions of the Ellentuck space are called topological Ramsey spaces.
In particular,  Todorcevic
has shown
that the theory of fronts and barriers in the Ellentuck space  extends to the
context of general topological Ramsey spaces.
This general theory has already found applications finding exact  initial segments of the 
Tukey structure of ultrafilters  in \cite{Raghavan/Todorcevic12}, \cite{Dobrinen/Todorcevic14}, \cite{Dobrinen/Todorcevic15}, 
\cite{Dobrinen/Mijares/Trujillo14}, and
\cite{DobrinenJSL15}.
In this context,
 the second author constructed a new hierarchy of 
topological Ramsey spaces 
in \cite{DobrinenJSL15} and \cite{DobrinenIDE15}
which form dense subsets of 
 the  Boolean algebras 
$\mathcal{P}(\om^{\al})/\Fin^{\al}$,
for $\al$ any countable ordinal.
Those constructions were motivated by the following.

The
Boolean algebra $\mathcal{P}(\om)/\Fin$,  the Ellentuck space, and Ramsey ultrafilters are closely connected.
A Ramsey ultrafilter is the strongest type of ultrafilter,
 satisfying the following partition relation: 
For each partition of the pairs of natural numbers into two pieces, there is a member of the ultrafilter such that all pairsets coming from that member are in the same piece of the partition.
Ramsey ultrafilters can be constructed via standard methods using $\mathcal{P}(\om)/\Fin$ and the Continuum Hypothesis or other set-theoretic techniques such as forcing.
The Boolean algebra $\mathcal{P}(\om^2)/\Fin^2$ is the next step in complexity above $\mathcal{P}(\om)/\Fin$.
This Boolean algebra can be used to generate
an ultrafilter $\mathcal{U}_2$ on base set $\om\times\om$  satisfying a weaker   partition relation: 
For each partition of the pairsets on $\om$ into five or more pieces, there is a member of $\mathcal{U}_2$ such that the pairsets on that member are all contained in four pieces of the partition.
Moreover, the projection of $\mathcal{U}_2$ to the first copy of $\om$ recovers a Ramsey ultrafilter.
In \cite{Blass/Dobrinen/Raghavan15}, many aspects of the ultrafilter $\mathcal{U}_2$ were investigated, but the exact  structure of the Tukey, equivalently cofinal, types below $\mathcal{U}_2$ remained open.
The topological Ramsey space $\mathcal{E}_2$ and more generally the spaces $\mathcal{E}_k$, $k\ge 2$, were constructed to produce  dense subsets of $\mathcal{P}(\om^k)/\Fin^k$
which form  topological Ramsey spaces,
thus setting the stage for 
 finding the exact structure  of the cofinal types of  all ultrafilters
Tukey reducible  to the ultrafilter  generated   by $\mathcal{P}(\om^k)/\Fin^k$  in  \cite{DobrinenJSL15}.

Once constructed, it became clear that  these new spaces  are the natural generalizations of the Ellentuck space to higher dimensions, and hence are called high-dimensional Ellentuck spaces.
This,  in conjunction with the multitude of  results on  Banach spaces  constructed using barriers on the original Ellentuck space, 
led the second author to infer  that  the general theory of barriers on these high-dimensional Ellentuck spaces would be a natural starting point for answering the following question.

\begin{question}\label{q.D}
What Banach spaces can be constructed by extending Tsirelson's construction method by using barriers in general topological Ramsey spaces?
\end{question}

The constructions presented in this paper have as their starting point Tsirelson's groundbreaking example of a
reflexive Banach space $T$ with an unconditional basis not containing $c_0$ or $\ell_p$ with $1
\leq p < \infty$ \cite{Tsirelson}. 
The idea of Tsirelson's construction became apparent
after Figiel and Johnson \cite{FigielJohnson} showed that the norm of the Tsirelson space satisfies the
following equation:
\vspace{4pt}
\beq \label{eqTsirelsonNorm}
\norm{\sum\nolimits_n a_n e_n} = \max \left \{ \sup\nolimits_n \abs{a_n}, \frac{1}{2} \sup
\sum_{i=1}^{m} \norm{E_i \left( \sum\nolimits_n a_n e_n \right)} \right \},
\eeq

\noindent where the $\sup$ is taken over all sequences $(E_i)_{i=1}^m$ of successive finite subsets
of integers with the property that $m \leq \min(E_1)$ and $E_i \left( \sum_n a_n e_n
\right) = \sum_{n \in E_i} a_n e_n$.

The first systematic abstract study of Tsirelson's construction was achieved by Argyros and Deliyanni
\cite{ArgyrosDeliyanni1}. Their construction starts with a real number $0 < \theta < 1$ and an
arbitrary family $\mcF$ of finite subsets of $\mbN$ that is the downwards closure of a barrier in
Ellentuck space. Then, one defines the \textit{Tsirelson type space} $T(\mcF,\theta)$ as the
completion of $\czz(\mbN)$ with the implicitly given norm (\ref{eqTsirelsonNorm}) replacing $1/2$
by $\theta$ and using sequences $(E_i)_{i=1}^m$ of finite subsets of positive integers which are
$\mcF$-admissible, i.e., there is some $\set{k_1,k_2,\ldots,k_m} \in \mcF$ such that
$k_1 \leq \min(E_1) \leq \max(E_1) < k_2 \leq \cdots < k_m \leq \min(E_m) \leq \max(E_m).$

In this notation, Tsirelson's original space is denoted by $T(\mcS,1/2)$, where $\mcS = \set{F
\subset \mbN : \abs{F} \leq \min(F)}$ is the Schreier family. 
In addition to  $\mcS$, the
\textit{low complexity hierarchy} $\{\mcA_d\}_{d=1}^\infty$ with
\vspace{4pt}
\beqs
\mcA_d := \set{F \subset \mbN : \abs{F} \leq d}
\eeqs

\noindent is noteworthy in the realm of Tsirelson type spaces. In fact, Bellenot proved
in \cite{Bellenot} the following remarkable theorem:

\bthm[Bellenot \cite{Bellenot}] \label{thmLCH}
If $d \theta > 1$, then for every $x \in T(\mcA_d, \theta)$,
\vspace{4pt}
\beqs
\frac{1}{2d} \norm{x}_p \leq \norm{x}_{T(\mcA_d, \theta)} \leq \norm{x}_p,
\eeqs

\noindent where $d\theta = d^{1/p}$ and $\norm{\cdot}_p$ denotes the $\ell_p$-norm.
\ethm

We  construct new Banach spaces  extending the low complexity hierarchy on the Ellentuck space  to  the low complexity hierarchy on  the
finite dimensional Ellentuck spaces.
This produces various  structured extensions of the $\ell_p$ spaces.
In Section \ref{secHDES} we review the construction of the finite-dimensional Ellentuck spaces in  \cite{DobrinenJSL15}, introducing   
new notation and  representations more suitable 
to the context of this paper.
Our  new Banach spaces $T_k(d,\theta)$  are constructed in Section \ref{sec.Bconstruction}, using finite rank barriers on the $k$-dimensional Ellentuck spaces.
These spaces   may be thought of as structured  generalizations of $\ell_p$, where 
 $d\theta=d^{1/p}$,
as they extends
the construction of
the Tsirelson type space $T_1(d,\theta)$,
 which by Bellenot's Theorem \ref{thmLCH} is  exactly  $\ell_p$.
We prove the following structural results about the spaces $T_k(d,\theta)$.
The spaces $T_k(d,\theta)$ 
contain arbitrarily large copies of $\ell_\infty^n$, where the bound is fixed for all $n$ (Section \ref{sec.subspaceslNinfty}),
and that 
there are many natural  block subspaces isomorphic to
$\ell_p$ (Section \ref{sec.blocksubspaces}). 
Moreover,  they are $\ell_p$-saturated (Section \ref{sec.l_psat}).
The spaces $T_k(d,\theta)$ are 
not isomorphic to each other (Section \ref{sec.notiso}),
but 
   for each $j<k$,  there are subspaces of $T_k(d,\theta)$ 
which are isometric to 
 $T_j(d,\theta)$ (Section \ref{sec.embedding}).
Thus, for fixed $d,\theta$,
the spaces $T_k(d,\theta)$, $k\ge 1$, form a natural hierarchy in complexity over $\ell_p$, where $d\theta=d^{1/p}$.

As there are several different ways to naturally generalize the Tsirelson construction to high dimensional Ellentuck spaces,
 we consider a  second, more stringent definition of norm and construct a second type of space $T(\mathcal{A}^k_d,\theta)$, where the admissible sets are required to be separated by sets which are finite approximations to members of the $\mathcal{E}_k$ (Section \ref{sec.9}).
The norms on these spaces are thus bounded by the norms on the $T_k(d,\theta)$ spaces.
The spaces $T(\mathcal{A}^k_d,\theta)$  are shown to have 
 the same properties as the  as shown for $T_k(d,\theta)$,  the only exception being  that we do not know whether $T(\mathcal{A}^j_d,\theta)$ embeds as an isometric subspace of $T(\mathcal{A}^k_d,\theta)$
for $j<k$.
The paper concludes with  open problems for further  research into the properties of these spaces.


\section{High Dimensional Ellentuck Spaces} \label{secHDES}

In \cite{DobrinenJSL15}, the second author constructed a new hierarchy $(\mcE_k)_{2
\le k < \omega}$ of
topological Ramsey spaces  which
generalize the Ellentuck space in a natural manner.
In this section, we reproduce the construction, though with slightly different notation more suited to the context of Banach spaces.

 Recall that the Ellentuck space \cite{Ellentuck74}  is the
triple $([\om]^{\om},\subseteq,r)$, where the finitization map $r:\om\times [\om]^{\om}\rightarrow [\om]^{<\om}$ is  defined as follows:  
For each $X
\in[\om]^{\om}$ and $n<\om$,   $r(n,X)$ is the set of   the least $n$ elements of $X$. 
Usually $r(n,X)$ is denoted by $r_n(X)$.
We shall let
$\mathcal{E}_1$ denote the Ellentuck space.

We now begin the process of defining the  high dimensional Ellentuck spaces $\mathcal{E}_k$, $k\ge 2$. The presentation
here is slightly different than, but equivalent to, the one in \cite{DobrinenJSL15}. We have chosen to do so in order to
simplify the construction of the Banach spaces.
In logic, 
the set of natural numbers $\{0,1,2,\dots\}$ is denoted by 
the symbol $\om$.
In keeping with the logic influence in \cite{DobrinenJSL15},
we shall use this notation.
 We start by defining a  well-ordering on the collection of all 
non-decreasing sequences  of members of
$\om$ which forms the backbone for the structure of the members in the spaces.

\begin{defn} \label{defOmle}
For $k \ge 2$, denote by $\omle{k}$ the collection of  all non-decreasing sequences of members
of $\om$ of length less than or equal to $k$.
\end{defn}

\begin{defn}[The well-order $\lex$] \label{defLex}
Let $(s_1,\dots,s_i)$ and $(t_1,\dots,t_j)$, with $i,j \ge 1$, be in $\omle{k}$. We say that
$(s_1,\dots,s_i)$ is lexicographically below $(t_1,\dots,t_j)$, written $(s_1,\dots,s_i) \lex
(t_1,\dots,t_j)$, if and only if there is a non-negative integer $m$ with the following properties:
\begin{enumerate}
\item[(i)]   $m \leq i$ and $m \leq j$;
\item[(ii)]  for every positive integer $n \leq m, s_n = t_n$; and
\item[(iii)] either $s_{m+1} < t_{m+1}$, or $m = i$ and $m < j$.
\end{enumerate}

This is just a generalization of the way the alphabetical order of words is based on the
alphabetical order of their component letters.
\end{defn}

\bexa
Consider the sequences $(1,2), (2)$, and $(2,2)$ in $\omle{2}$. Following the preceding definition
we have $(1,2) \lex (2) \lex (2,2)$. 
We conclude that $(1,2) \lex
(2)$ by setting $m = 0$ in Definition \ref{defLex}; similarly, $(2) \lex (2,2)$ follows by setting
$m = 1$, as any proper initial segment of a sequence is lexicographically below that
sequence.
\eexa

\begin{defn}[The well-ordered set $(\omle{k},\prec)$] \label{defPrec}
Set the empty sequence $()$ to be the $\prec$-minimum element of $\omle{k}$; so, for all nonempty
sequences $s$ in $\omle{k}$, we have $() \prec s$. In general, given  $(s_1,\dots,s_i)$ and
$(t_1,\dots,t_j)$ in $\omle{k}$ with $i,j \ge 1$, define $(s_1,\dots,s_i) \prec (t_1,\dots,t_j)$
if and only if either
\begin{enumerate}
\item
$s_i < t_j$, or
\item
$s_i = t_j$ and
$(s_1,\dots,s_i) <_{\mathrm{lex}} (t_1,\dots,t_j)$.
\end{enumerate}
\end{defn}


\noindent \textbf{Notation.}
Since  $\prec$  well-orders  $\omle{k}$ in order-type $\om$, we fix the notation of letting
$\vec{s}_m$  denote the $m$-th member of $(\omle{k},\prec)$.
Let $\ome{k}$ denote the collection of all non-decreasing sequences of length $k$ of members of
$\om$. Note that $\prec$ also well-orders $\ome{k}$ in order type $\om$. Fix the notation of letting
$\vec{u}_n$ denote the $n$-th member of $(\ome{k},\prec)$.
For $s,t\in \omle{k}$,
we say that $s$ is  an {\em  initial segment} of $t$ and write $s\sqsubset t$ if 
$s=(s_1,\dots,s_i)$, $t=(t_1,\dots,t_j)$,
$i<j$,
and for all $m\le i$, $s_m=t_m$.

\begin{defn}[The spaces $(\mathcal{E}_k,\le,r)$, $k\ge 2$, Dobrinen \cite{DobrinenJSL15}] \label{defE_k}
An {\em $\mathcal{E}_k$-tree} 
 is a function   $\widehat{X}$ from $\omle{k}$ into $\omle{k}$ that preserves the well-order $\prec$  and  initial segments $\sqsubset$.
For $\widehat{X}$ an $\mathcal{E}_k$-tree, let $X$ denote the restriction of $\widehat{X}$ to
$\ome{k}$. 
The space $\mathcal{E}_k$  is defined to be the collection of all $X$ such that  $\widehat{X}$
is an $\mathcal{E}_k$-tree.
We identify $X$ with its range and usually will write $X = \set{v_1,v_2,\ldots}$, where
$v_1 = X(\vec{u}_1) \prec v_2 = X(\vec{u}_2) \prec \cdots$. 
The partial ordering on $\mathcal{E}_k$ is defined to be simply inclusion; that is, given $X,Y\in \mathcal{E}_k$,
$X\le Y$ if and only if  (the range of) $X$ is a subset of  (the range of) $Y$.
For each $n<\om$, the $n$-th restriction function $r_n$ on $\mathcal{E}_k$  is defined 
 by 
 $r_n(X) = \set{v_1,v_2,\ldots,v_n}$
that is, 
the  $\prec$-least $n$  members of 
$X$. 
When necessary for clarity, we write $r^k_n(X)$ to highlight that $X$ is a member of $\mathcal{E}_k$.
We set
\vspace{3pt}
\beqs
\mathcal{AR}_n^k := \{ r_n(X) : X \in \mathcal{E}_k \}
\hspace{1.1cm} \textnormal{and} \hspace{1.1cm}
\mathcal{AR}^k := \{ r_n(X) : n < \om, X \in \mathcal{E}_k \}
\eeqs
to denote the set of all 
$n$-th approximations to members of $\mathcal{E}_k$, 
and the set of all
finite approximations to members of $\mathcal{E}_k$, respectively.
\end{defn}

\begin{rem}
Let $s \in \omle{k}$ and denote its length  by $|s|$.
Since $\widehat{X}$ preserves initial segments, it follows that $|\widehat{X}(s)| = \abs{s}$.
Thus, $\mathcal{E}_k$ is the space of all functions $X$ from $\ome{k}$ into $\ome{k}$ which induce
an $\mathcal{E}_k$-tree. 
Notice  that the
identity function  is identified with $\ome{k}$. The set $\ome{k}$
is 
  the prototype for all members of $\mcE_k$
in the sense that every member  $X$ of $\mcE_k$ will be a subset of $\ome{k}$ which 
has the same structure
as $\ome{k}$, according to the interaction between the two orders $\prec$ and $\sqsubset$.
Definition \ref{defE_k} essentially is  generalizing  the key points about 
the structure, according to $\prec$ and $\sqsubset$,  of the
identity function on $\omle{k}$. 
\end{rem}

The family of all non-empty finite subsets of $\ome{k}$ will be denoted by $\fin(\ome{k})$. Clearly,
$\mcAR^k \subset \fin(\ome{k})$. If $E \in \fin(\ome{k})$, then we denote the minimal and maximal
elements of $E$ with respect to $\prec$ by $\minprec(E)$ and $\maxprec(E)$, respectively.

\begin{exa}[The space $\mathcal{E}_2$]\label{exE_2}
The members of $\mathcal{E}_2$ look like $\om$ many copies of the Ellentuck space.
The well-order $(\omle{2}, \prec)$  begins as follows:
\vspace{4pt}
\begin{equation*}
()\prec (0)\prec (0,0)\prec (0,1)\prec (1)\prec (1,1)\prec (0,2)\prec (1,2)\prec (2)\prec (2,2)\prec\cdots
\end{equation*}
The tree structure of $\omle{2}$, under lexicographic order, looks like $\om$
copies of $\om$, and has order type the countable ordinal $\om\cdot\om$.
Here, we picture the finite tree $\{ \vec{s}_m: 1 \le m \le 21 \}$, which indicates how the rest
of the tree $\omle{2}$ is formed. This is exactly  the tree formed by
taking all initial segments of the set $\{ \vec{u}_n : 1 \le n \le 15 \}$.

\begin{figure}[H]
{\footnotesize
\begin{tikzpicture}[scale=.71,grow'=up, level distance=40pt,sibling distance=.2cm]
\tikzset{grow'=up}
\Tree [.$()$ [.$(0)$ [.$\rotatebox{45}{(0,0)}$ ][.$\rotatebox{45}{(0,1)}$ ][.$\rotatebox{45}{(0,2)}$ ][.$\rotatebox{45}{(0,3)}$ ]  [.$\rotatebox{45}{(0,4)}$ ]   ] [.$(1)$ [.$\rotatebox{45}{(1,1)}$ ][.$\rotatebox{45}{(1,2)}$ ][.$\rotatebox{45}{(1,3)}$ ][.$\rotatebox{45}{(1,4)}$ ] ] [.$(2)$ [.$\rotatebox{45}{(2,2)}$ ] [.$\rotatebox{45}{(2,3)}$ ][.$\rotatebox{45}{(2,4)}$ ] ] [.$(3)$ [.$\rotatebox{45}{(3,3)}$ ] [.$\rotatebox{45}{(3,4)}$ ] ]  [.$(4)$  [.$\rotatebox{45}{(4,4)}$ ] ] ]
\end{tikzpicture}
}
\captionsetup{singlelinecheck=on}
\vspace{-0.1cm}
\caption{Initial structure of $\omle{2}$}
\label{figTRw2}
\end{figure}
\end{exa}

Next we present the specifics of the structure of the space $\mathcal{E}_3$.

\begin{exa}[The space $\mathcal{E}_3$]\label{exE_3}
The well-order $(\omle{3},\prec)$ begins as follows:
\vspace{4pt}
\begin{align*}
() &\prec (0) \prec (0,0)\prec (0,0,0)\prec (0,0,1)\prec (0,1)\prec (0,1,1)\prec (1)\cr
&\prec (1,1)
\prec (1,1,1)\prec
(0,0,2)\prec (0,1,2)\prec (0,2)\prec 
(0,2,2)
\cr
&\prec (1,1,2)\prec (1,2)\prec (1,2,2)\prec (2)\prec (2,2)\prec (2,2,2)\prec (0,0,3)\prec \cdots
\end{align*}

The set $\omle{3}$ is a tree of height three with each non-maximal node branching into $\om$ many nodes.  
The maximal nodes in the following figure are technically the set $\{ \vec{u}_n : 1 \le n \le 20 \}$,
which indicates  the structure of $\omle{3}$.

\begin{figure}[H]
{\footnotesize
\begin{tikzpicture}[scale=.54,grow'=up, level distance=30pt,sibling distance=.1cm]
\tikzset{grow'=up}
\Tree [.$()$ [.$(0)$ [.$(0,0)$ [.$\rotatebox{45}{(0,0,0)}$ ][.$\rotatebox{45}{(0,0,1)}$ ][.$\rotatebox{45}{(0,0,2)}$ ] [.$\rotatebox{45}{(0,0,3)}$ ] ][.$(0,1)$ [.$\rotatebox{45}{(0,1,1)}$ ][.$\rotatebox{45}{(0,1,2)}$ ] [.$\rotatebox{45}{(0,1,3)}$ ]][.$(0,2)$ [.$\rotatebox{45}{(0,2,2)}$ ]  [.$\rotatebox{45}{(0,2,3)}$ ]]  [.$(0,3)$ [.$\rotatebox{45}{(0,3,3)}$ ] ]   ][.$(1)$ [.$(1,1)$ [.$\rotatebox{45}{(1,1,1)}$ ][.$\rotatebox{45}{(1,1,2)}$ ] [.$\rotatebox{45}{(1,1,3)}$ ] ] [.$(1,2)$ [.$\rotatebox{45}{(1,2,2)}$ ]  [.$\rotatebox{45}{(1,2,3)}$ ]] [.$(1,3)$ [.$\rotatebox{45}{(1,3,3)}$ ] ] ][.$(2)$ [.$(2,2)$ [.$\rotatebox{45}{(2,2,2)}$ ] [.$\rotatebox{45}{(2,2,3)}$ ] ] [.$(2,3)$ [.$\rotatebox{45}{(2,3,3)}$ ] ] ][.$(3)$  [.$(3,3)$ [.$\rotatebox{45}{(3,3,3)}$ ] ] ]]
\end{tikzpicture}
}
\captionsetup{singlelinecheck=on}
\vspace{-0.1cm} 
\caption{Initial structure of $\omle{3}$}
\end{figure}
\end{exa}

\subsection{Upper Triangular Representation of $\ome{2}$}
We now present an alternative and very useful way to visualize elements of $\mcE_2$. This turned
out to be fundamental to developing more intuition and to understanding the Banach spaces that we define
in the following section. We refer to it as the \textit{upper triangular representation of}
$\ome{2}$:


The well-order $(\ome{2}, \prec)$  begins as follows: $(0,0) \prec (0,1) \prec (1,1) \prec (0,2)
\prec (1,2) \prec (2,2) \prec \cdots$. In comparison with the tree representation shown in Figure
\ref{figTRw2}, the upper triangular representation makes it simpler to visualize this well-order:
Starting at $(0,0)$ we move from top to bottom throughout each column, and then to the right to the
next column.

The following figure shows the initial part of an $\mcE_2$-tree $\widehat{X}$. The highlighted
pieces represent the restriction of $\widehat{X}$ to $\ome{2}$.
\begin{figure}[H]
\includegraphics[scale=0.3]{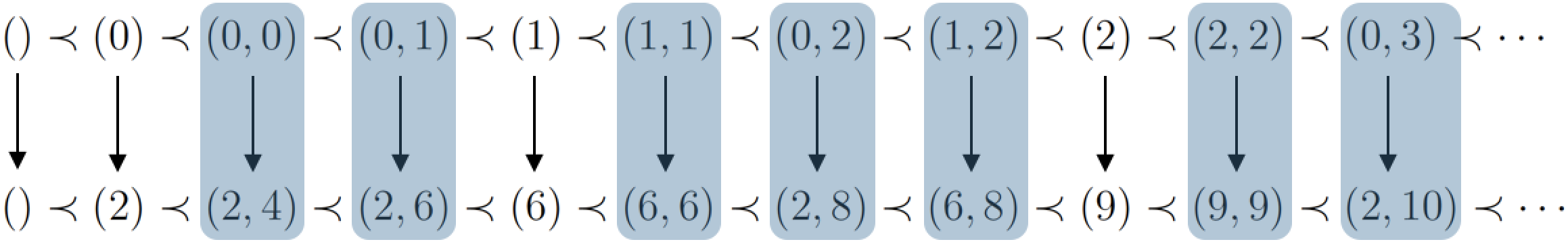}
\captionsetup{singlelinecheck=on}
\caption{Initial part of an $\mcE_2$-tree}
\end{figure}

Under the identification discussed after Definition \ref{defE_k}, we have that 
\vspace{3pt}
\beqs
X = \set{(2,4),(2,6),(6,6),(2,8),(6,8),(9,9),(2,10),\ldots}
\eeqs

\noindent is an element of $\mcE_2$. Using the upper triangular representation of $\ome{2}$ we can
visualize $r_{10}(X)$:


\subsection{Special Maximal Elements of $\mcE_k$}
There are special elements in $\mcE_k$ that are useful for describing the structure of some subspaces
of the Banach spaces that we define in the following section. Given $v \in \ome{k}$ we want to
construct a special $X_v^{\max} \in \mcE_k$ that has $v$ as its first element and that is maximal in the sense that every other $Y\in\mathcal{E}_2$ with $v$ as its $\prec$-minimum member is a subset of $X$.

For example, if $v = (0,4) \in \ome{2}$, then a finite approximation of $X_v^{\max} \in \mcE_2$
looks like this:

\begin{figure}[H]
\includegraphics[scale=0.23]{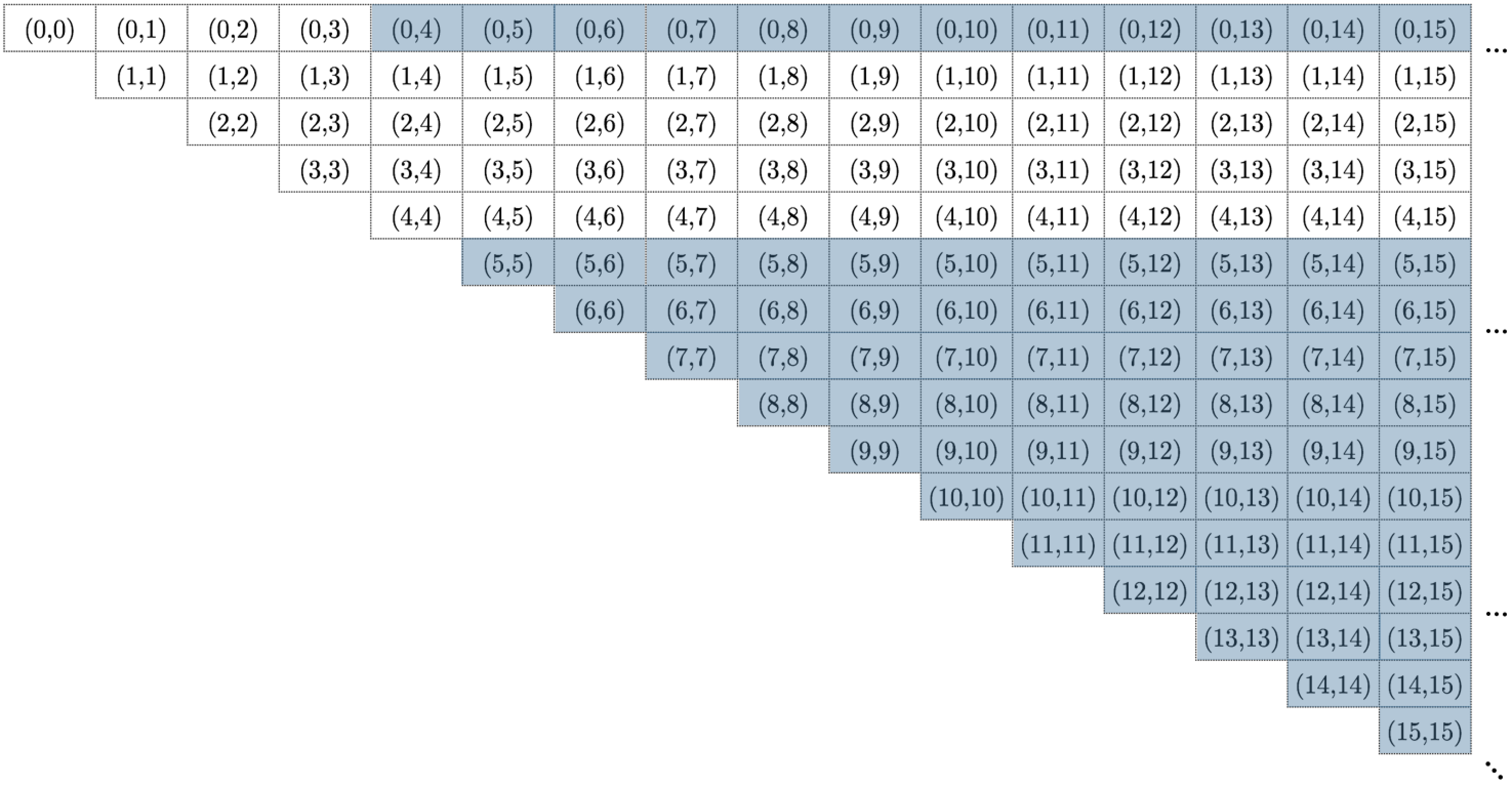}
\centering
\vspace{-1cm}
\caption{A finite approximation of $X_{(0,4)}^{\max} \in \mcE_2$}
\end{figure}

Now let us illustrate this with $k=3$ and $v=(0,2,7)$. Since we want $v$ as the first element of
$X_v^{\max}$, we identify it with $(0,0,0)$ and then we choose the next elements as small as
possible following Definition \ref{defE_k}:
\vspace{3pt}
\begin{align*}
() &\prec (0) \prec (0,2) \prec (0,2,7) \prec (0,2,8) \prec (0,8) \prec (0,8,8) \prec (8) \cr
   &\prec (8,8) \prec (8,8,8) \prec (0,2,9) \prec (0,8,9) \prec (0,9) \prec (0,9,9) \cr
   &\prec (8,8,9) \prec (8,9) \prec (8,9,9) \prec (9) \prec (9,9) \prec (9,9,9) \prec (0,2,10) \prec
          \cdots
\end{align*}

Therefore, under the identification discussed after Definition \ref{defE_k}, we have
\vspace{3pt}
\beqs
X_v^{\max} = \{(0,2,7), (0,2,8), (0,8,8), (8,8,8), (0,2,9), \ldots\}.
\eeqs

In general, for any $k\ge 2$, $X_v^{\max}$ is constructed as follows.

\begin{defn}\label{defn.Xmax}
Let $k\ge 2$ be given 
and suppose $v = (n_1,n_2,\ldots,n_k)$. First we
define the $\mcE_k$-tree $\widehat{X}_v$ that will determine $X_v^{\max}$. 
$\widehat{X}_v$ must be
a function from $\omle{k}$ to $\omle{k}$ satisfying Definition \ref{defE_k} and such that
$\widehat{X}_v(0,0,\ldots,0) = v$. So, for $m,j \in \Zp, j \le k$, define the
following auxiliary functions: $f_j(0) := n_j$ and $f_j(m) := n_k+m$. Then, for
$t = (m_1,m_2,\ldots,m_l) \in \omle{k}$ set $\widehat{X}_v(t) := (f_1(m_1),f_2(m_2),\ldots,
f_l(m_l))$. Finally, define $X_v^{\max}$ to be the restriction of $\widehat{X}_v$ to $\ome{k}$.
\end{defn}

It is routine to  check that:

\blem \label{lemElemXvmax} 
Let $v = (n_1,\ldots,n_k)$.  Then $\widehat{X}_v$ is an $\mcE_k$-tree, and $w = (m_1,\ldots,m_k)\in \ome{k}$
belongs to $X_v^{\max}$ if and only if
either  $m_1 > n_k$ or
else there is $1 \leq i \le k$ such that $(m_1,m_2,
\ldots,m_i) = (n_1,n_2,\ldots,n_i)$, and if $i<k$ then  $m_{i+1} > n_k$. 
\elem

We use this Lemma to prove that $X_v^{\max}$ contains all elements of $\mcAR^k$ that have $v$ as initial value.

\bprop \label {propXvmax}
Let $E\in \mcAR^k$ and $v=\min_{\prec}(E)$. Then $E\subset X_v^{\max}$.
\eprop

\begin{proof}
Let $E\in \mcAR^k$ with $v=\min_{\prec}(E)$.  Then there exists an $\mcE_k$-tree $\widehat{X}$ such that $\widehat{X}(0,\dots,0)=(n_1,\dots,n_k)=v$ and $E=r_j(X)$ for some $j$.
If $E$ has more than one member, then its 
 second element is
$\widehat{X}(0,\dots,0,1)=(n_1,\dots,n_{k-1}, n_k')$, 
for some $n_k'>n_k$.  By Lemma \ref{lemElemXvmax}, $\widehat{X}(0,\dots,0,1)\in X_v^{\max}$.

Suppose that $w=(p_1,\dots,p_k)$ is any member of $E$ 
besides $v$.
We will show that $w\in X_v^{\max}$. 
Let  $(m_1,\dots,m_k)\in \ome{k}$  be the sequence such that $w=\widehat{X}(m_1,\dots,m_k)$.
 Suppose first that $m_1>0$.  Then
$$(0,\dots,0,1)\prec (m_1) \prec (m_1,m_2,\dots,m_k).$$
Applying $\widehat{X}$ and recalling that $\widehat{X}$ preserves $\prec$ and $\sqsubset$, we conclude that
$$(n_1,\dots,n_{k-1},n_k')\prec (p_1)\prec (p_1,p_2,\dots,p_k).$$
Comparing the first two elements we see that
either  $p_1>n_k'$ or else $p_1=n_k'$ and $p_1>n_1$.  
In both cases, $p_1\geq n_k'>n_k$, which, by Lemma \ref{lemElemXvmax}, gives that $(p_1,p_2,\dots,p_k)=w\in X_v^{\max}$.

Suppose now that $m_1=\dots=m_i=0$ and $m_{i+1}>0$, where $i+1\le k$.  Then
$$(0,\dots,0,1)\prec (m_1,\dots,m_i,m_{i+1})=(0,\dots,0,m_{i+1})\prec (0,\dots,0,m_{i+1},\dots,m_k).$$
Applying $\widehat{X}$ we obtain 
$$(n_1,\dots,n_{k-1},n_k')\prec (n_1,\dots,n_i,p_{i+1})\prec (n_1,\dots, n_i,p_{i+1},\dots,p_k).$$
Arguing as before, we conclude that $p_{i+1}\geq n_k'>n_k$, and then Lemma \ref{lemElemXvmax} yields that $w\in X_v^{\max}$.
\end{proof}

\bcor \label{cor1LemElemXvmax}
If $w \in X_v^{\max}$, then $X_w^{\max} \subseteq X_v^{\max}$. 
In particular, if $E \in \mcAR^k$ and
$\minprec(E) \in X_v^{\max}$, then $E \subset X_v^{\max}$.
\ecor

Notice that  the only elements in  $\widehat{X}_v$ that are $\prec$-smaller than $v$ are the
initial segments of $v$.

\bcor \label{cor2LemElemXvmax}
If $s \prec v$ and $s \in \ran{\widehat{X}_v}$ then $s \sqsubset v$.
\ecor


\section{The Banach Space $T_k(d, \theta)$}\label{sec.Bconstruction}

\subsection{Preliminary Definitions}
Set $\mbN := \set{0,1,\ldots}$ and $\Zp := \mbN \setminus \{0\}$. For the rest of this paper, fix
$d,k \in \Zp, k \geq 2$ and $\theta \in \mbR$ with $0 < \theta < 1$. Given $E,F \in \fin(\ome{k})$,
we write $E < F$ (resp. $E \leq F$) to denote that $\maxprec(E) \prec \minprec(F)$ (resp.
$\maxprec(E) \preceq \minprec(F)$), and in this case we say that $E$ and $F$ are successive.
Similarly, for $v \in \ome{k}$, we write $v < E$ (resp. $v \leq E$) whenever $\set{v} < E$ (resp.
$\set{v} \leq E$).

By $\czz(\ome{k})$ we denote the vector space of all functions $x : \ome{k} \to \mbR$ such that the
set $\supp{x} := \{ v \in \ome{k} : x(v) \neq 0 \}$ is finite. Usually we write $x_v$ instead of
$x(v)$. We can extend the orders defined above to vectors $x,y \in \czz(\ome{k})$: $x < y$ (resp.
$x \leq y$) iff $\supp{x} < \supp{y}$ (resp. $\supp{x} \leq \supp{y}$).

From the notation in Section \ref{secHDES} we  have  that $\ome{k} = \{ \vec{u}_1,\vec{u}_2,\ldots \}$. Denote by $(e_{\vec{u}_n})_{n=1}^{\infty}$
the canonical basis of $\czz(\ome{k})$. To simplify notation, we will usually write $e_n$ instead
of $e_{\vec{u}_n}$. So, if $x \in \czz(\ome{k})$, then $x = \sum_{n=1}^\infty x_{\vec{u}_n} 
e_{\vec{u}_n} = \sum_{n=1}^m x_{\vec{u}_n} e_{\vec{u}_n}$ for some $m \in \Zp$. Using the above
convention, we will write $x = \sum_{n=1}^\infty x_n e_n = \sum_{n=1}^m x_n e_n$. If $E \in 
\mcAR^k$, we put $Ex := \sum_{v \in E} x_v e_v$.


\subsection{Construction of $T_k(d, \theta)$}
The Banach spaces that we introduce in this section have their roots (as all subsequent
constructions \cite{Bellenot}, \cite{Schlumprecht}, \cite{ArgyrosDeliyanni1}, \cite{GowersMaurey},
\cite{ArgyrosDeliyanni2}) in Tsirelson's fundamental discovery of a reflexive Banach space $T$ with
an unconditional basis not containing $c_0$ or $\ell_p$ with $1 \leq p < \infty$ \cite{Tsirelson}.
Based on these constructions, we present the following definitions.

\bdefn \label{defAdmissible}
For $m\leq d$, we say that
$(E_i)_{i=1}^m \subset \mcAR^k$ is $d$\textit{-admissible}, or simply \textit{admissible},
if and only if $E_1 < E_2 < \cdots < E_m$.
\edefn

For $x = \sum_{n=1}^{\infty} x_n e_n \in \czz(\ome{k})$ and $j \in \mbN$, we define a non-decreasing
sequence of norms on $\czz(\ome{k})$ as follows:
\vspace{0.3cm}
\begin{flalign*}
\hspace{0.1cm} \bullet \hspace{0.1cm} \abs{x}_0 &:= \max_{n \in \Zp} \abs{x_n}, &
\end{flalign*}

\begingroup \makeatletter \def \f@size{11} \check@mathfonts
\begin{flalign*}
\hspace{0.1cm} \bullet \hspace{0.1cm} \abs{x}_{j+1} &:= \max \left \{ \abs{x}_j, \theta \max \left
\{ \sum_{i=1}^{m} \abs{E_i x}_j : 1 \leq m \leq d, (E_i)_{i=1}^{m} \; \textit{admissible}
\right \} \right \}. &
\end{flalign*}
\endgroup
For fixed $x \in \czz(\ome{k})$, the sequence $(\abs{x}_j)_{j \in \mbN}$ is bounded above by the
$\ell_1(\ome{k})$-norm of $x$. Therefore, we can set
\vspace{4pt}
\beqs
\norm{x}_{T_k(d,\theta)} := \sup_{j \in \mbN} \abs{x}_j.
\eeqs
Clearly, $\norm{\cdot}_{{T_k(d,\theta)}}$ is a norm on $\czz(\ome{k})$.

\bdefn \label{defTk}
The completion of $\czz(\ome{k})$ with respect to the norm $\norm{\cdot}_{{T_k(d,\theta)}}$ is
denoted by $({T_k(d,\theta)}, \norm{\cdot})$.
\edefn

Notice that when $k=1$, $T_1(d,\theta)$ is the space Bellenot considered \cite{Bellenot}.

For $v \in \ome{k}$ and $x \in T_k(d,\theta)$ we also
write $v < x$ whenever $v < \supp{x}$. From the preceding definition we have the following:

\bprop
If $x \in \czz(\ome{k})$ and $\abs{\supp{x}} = n$, then $\abs{x}_n = \abs{x}_{n+1} = \cdots$.
\eprop

Therefore, we conclude that for every $x \in \czz(\ome{k})$ we have
\vspace{4pt}
\beqs
\norm{x}_{T_k(d,\theta)} = \max_{j \in \mbN} \abs{x}_j.
\eeqs

The following propositions follow by standard arguments (see Proposition 2 in \cite{Schlumprecht}):

\bprop
$(e_n)_{n=1}^{\infty}$ is a 1-unconditional basis of $T_k(d,\theta)$.
\eprop

\vspace{0.1cm}
\bprop
For $x = \sum_{n=1}^{\infty} x_n e_n \in T_k(d,\theta)$ it follows that 
\vspace{6pt}
\begingroup \makeatletter \def \f@size{11} \check@mathfonts
\beqs
\norm{x} = \max \left \{ \norm{x}_\infty, \theta \sup \left \{ \sum_{i=1}^{m} \norm{E_i x} :
1 \leq m \leq d, (E_i)_{i=1}^{m} \; d\textit{-admissible} \right \} \right \},
\eeqs
\endgroup

\noindent where $\norm{x}_\infty := \sup_{n \in \Zp} \abs{x_n}$.
\eprop


\section{Subspaces of $T_k(d,\theta)$ isomorphic to $\ell_\infty^N$}\label{sec.subspaceslNinfty}

The Banach space $T_k(d,\theta)$ ``lives'' in $\ome{k}$, at the top of $\omle{k}$.  We will see that it's structure is
determined by subspaces indexed by elements in the lower branches.  
Let $s \in \omle{k}$. The tree generated by $s$ and the Banach space associated to it are
given by
\vspace{3pt}
\beqs
\tau^k[s] := \set{v \in \ome{k} : s \sqsubseteq v}
\hspace{1.3cm} \textnormal{and} \hspace{1.3cm}
T^k[s] := \Span{e_v : v \in \tau^k[s]},
\eeqs

\noindent respectively. In this section, let $N \in \Zp$ and $s_1,\ldots,s_N \in
\omle{k}$ be such that $\abs{s_1} = \cdots = \abs{s_N} < k$ and $s_1 \prec \cdots \prec s_N$. 
The following is a very useful result.

\bcor \label{cor3LemElemXvmax}
If $v \in \ome{k}$ satisfies $s_N \prec v$, then there is at most one $i \le N$ such that 
$X_v^{\max} \cap \tau^k[s_i] \neq \emptyset$.
\ecor
\bpf
Suppose that $v=(n_1,n_2,\dots,n_k)$ and let $w=(m_1,m_2,\dots,m_k)\in X^{\max}_v$.  
It follows from Lemma \ref{lemElemXvmax} that either $m_1>n_k$ or that there is $0<i<k$ such that 
$(m_1,\dots,m_i)=(n_1,\dots,n_i)$ and $m_{i+1}>n_k$. 

If $m_1>n_k$, $w$ does not belong to any $\tau^k[s_i]$ because $n_k$ is greater than any coordinate of the $s_i$'s and as a result, none of the $s_i$'s can be an initial segment of $w$.  Heence it follows from the other option of $w$ that
the the only way an $s_i$ is an initial segment of $w$ is if $s_i$ is also an initial segment of $v$.  Since all the $s_i$'s have the same length, at most one of them is an initial segment of $v$ and the result follows.
\epf

It is useful to have an analogous result to the preceding corollary but related to approximations
$E \in \mcAR^k$ instead of special maximal elements of $\mcE_k$:

\blem \label{lemEcapTau}
Suppose $E \in \mcAR^k$ and set $v := \minprec(E)$. If $s \prec v$ and $s \sqsubset v$, then $E \cap
\tau^k[s] \neq \emptyset$.
\elem

We will study the Banach space structure of the subspaces of $T_k(d,\theta)$ of the form $Z :=
T^k[s_1] \oplus T^k[s_2] \oplus \cdots \oplus T^k[s_N]$. Since $(e_{\vec{u}_n})_{n=1}^{\infty}$ is
1-unconditional, we can decompose $Z$ as $F \oplus C$, where
\vspace{3pt}

\begingroup \makeatletter \def \f@size{11} \check@mathfonts
\begin{equation} \label{eq:FC}
\begin{split}
F & = \Span{e_v \in Z : v \in \ome{k}, v \preceq s_N}
\hspace{0.3cm} \textnormal{and} \hspace{0.3cm} \\
 C & = \Span{e_v \in Z : v \in \ome{k}, s_N \prec v}.
\end{split}
\end{equation}
\endgroup

By setting $k = 2, N = 4, s_1 = (4), s_2 = (6), s_3 = (8)$, and $s_4 = (10)$, the following figure
shows the elements of $\ome{2}$ used to generate the subspaces $F$ (dashed outline) and $C$
(thicker outline) in which we decompose the subspace $T^2[(4)] \oplus T^2[(6)] \oplus
T^2[(8)] \oplus T^2[(10)]$:

\vspace{5pt}
\begin{figure}[H]
\includegraphics[scale=0.2]{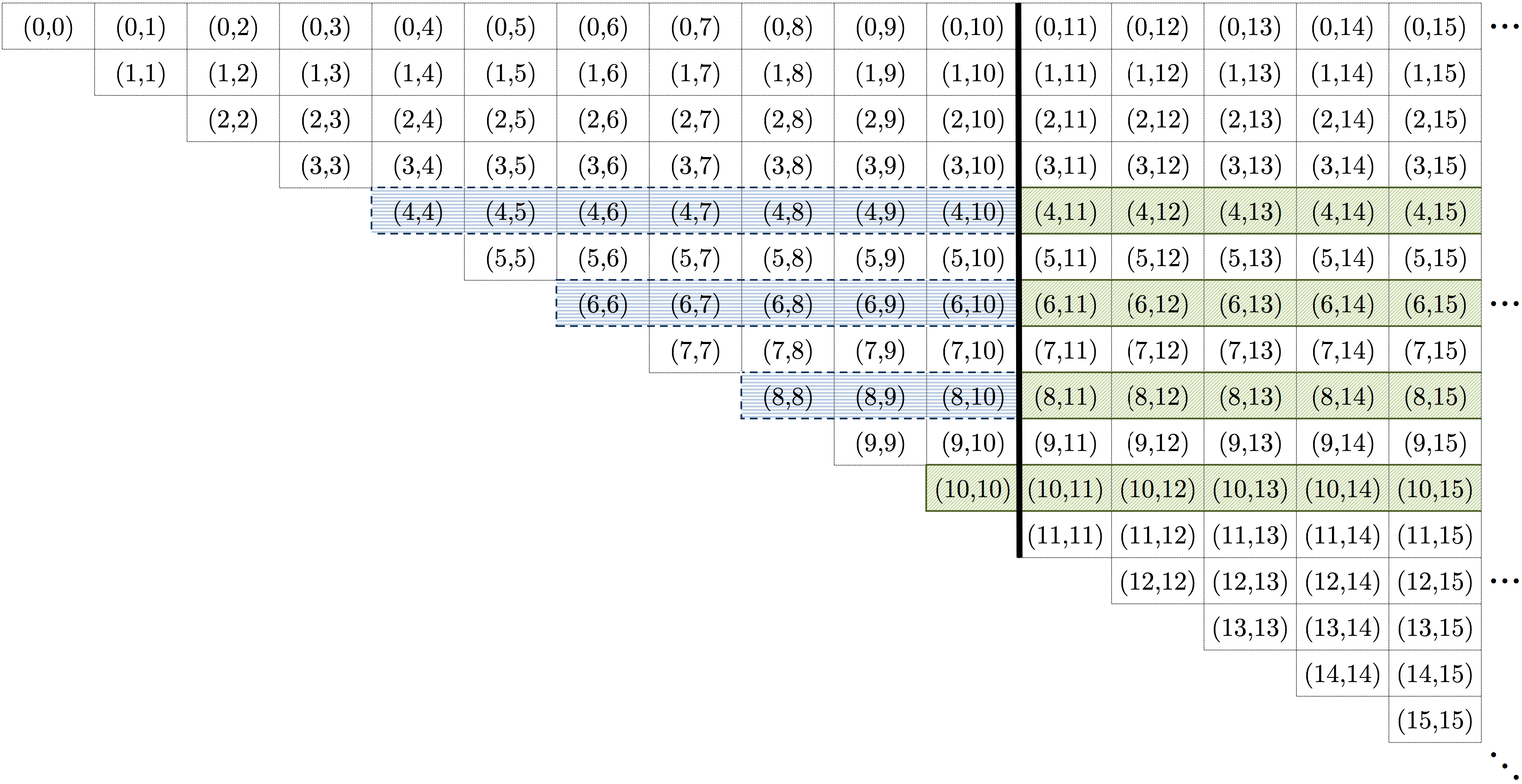}
\captionsetup{singlelinecheck=on, font=scriptsize}
\vspace{-0.2cm}
\caption{Elements of $\ome{2}$ used to generate $T^2[(4)] \oplus T^2[(6)] \oplus T^2[(8)] \oplus
T^2[(10)]$}
\end{figure}

Applying Corollary \ref{cor3LemElemXvmax} and Lemma \ref{lemEcapTau} we have:

\blem \label{lemInitialSegment}
Let $E \in \mcAR^k$ be such that $s_N \prec \minprec(E)$. If $E[T_k(d,\theta)] := \Span{e_w : w \in
E}$, then either $E[T_k(d,\theta)] \cap C = \emptyset$, or there is exactly one $i \leq N$ such
that $E[T_k(d,\theta)] \cap C \subset T^k[s_i]$.
\elem
\bpf
Suppose that $E[T_k(d,\theta)] \cap C \neq \emptyset$ and set $v := \minprec(E)$. Then, by
Corollary \ref{cor3LemElemXvmax}, there is exactly one $i \in \set{1,\ldots,N}$ such that
$X_v^{\max} \cap \tau^k[s_i] \neq \emptyset$; consequently, $s_i \sqsubset v$ and $E \cap
\tau^k[s_j] = \emptyset$ for any $j \in \set{1,\ldots,N}, j \neq i$. By hypothesis, $s_i
\preceq s_N \prec v$. Applying Lemma  \ref{lemEcapTau} we conclude that $E \cap \tau^k[s_i] \neq \emptyset$.
Hence, $E[T_k(d,\theta)] \cap C \subset T^k[s_i]$.
\epf

This lemma helps us establish the presence of arbitrarily large copies of $\ell_\infty^N$ inside
$T_k(d,\theta)$:

\bthm \label{thmSubspaceslinfty}
Suppose that $s_1\prec s_2\prec \dots \prec s_N$ belong to $\oml{k}$ and that $|s_1|=\cdots=|s_N|<k$.
Let $v\in\ome{k}$ with $s_N\prec v$ and suppose that $x\in \sum_{i=1}^N\oplus T^k[s_i]$ satifies $v<x$.  
If we decompose $x$ as $x_1+\cdots+x_N$ with $x_i \in T^k[s_i]$, then
\vspace{1pt}
\beqs
\max_{1 \leq i \leq N} \norm{x_i} \leq \norm{x} \leq \frac{\theta(d-1)}{1-\theta}
\max_{1 \leq i \leq N} \norm{x_i}.
\eeqs
In particular, if $\norm{x_1} = \cdots = \norm{x_N} = 1$, $\hbox{span}\{x_1,\dots,x_N\}$ is
isomorphic to $\ell_\infty^N$ in a canonical way and the isomorphism constant is independent of $N$ and of the $x_i$'s.
\ethm 
\bpf
Since the basis of $T_k(d,\theta)$ is unconditional,  $\max_{1 \leq i \leq N} \norm{x_i} \leq \norm{x}$. We will check the upper
bound. Let $m \in \set{1,\ldots,d}$ and $(E_i)_{i=1}^m \subset \mcAR^k$ be an admissible sequence
such that $\norm{x} = \theta \sum_{i=1}^m \norm{E_i x}$.

Without loss of generality we assume that $E_1 x \neq 0$, so that $s_N \prec \minprec(E_2)$. By Lemma
\ref{lemInitialSegment}, when $j \geq 2$, we have $E_j x = E_j x_i$ for some $i \in \set{1,\ldots,
N}$. Then it follows that $\norm{E_j x} \leq \max_{1 \leq i \leq N} \norm{x_i}$. Consequently,
\vspace{3pt}
\beqs
\norm{x} \leq \theta \norm{E_1 x} + \theta (d-1) \max_{1 \leq i \leq N} \norm{x_i}.
\eeqs

Repeat the argument for $E_1 x$. Find $m^\prime \in \set{1,\ldots,d}$ and an admissible sequence
$(F_i)_{i=1}^{m^\prime} \subset \mcAR^k$ such that $\norm{E_1 x} = \theta \sum_{i=1}^{m^\prime}
\norm{F_i(E_1 x)}$. We can assume that $F_1(E_1 x) \neq 0$, and applying Lemma \ref{lemInitialSegment}
once again we conclude that for $j \geq 2, \norm{F_j(E_1 x)} \leq \max_{1 \leq i \leq N} \norm{x_i}$.
Then,
\vspace{3pt}
\beqs
\norm{x} \leq \theta \left( \theta \norm{F_1(E_1 x)} + \theta (d-1) \max_{1 \leq i \leq N}
         \norm{x_i} \right) + \theta (d-1) \max_{1 \leq i \leq N} \norm{x_i}.           
\eeqs


Iterating this process we conclude that
\vspace{3pt}
\beqs
\norm{x} \leq \sum_{n=1}^\infty \theta^n (d-1) \max_{1 \leq i \leq N} \norm{x_i}
         \leq \frac{\theta (d-1)}{1-\theta} \max_{1 \leq i \leq N} \norm{x_i}.    
\eeqs
\epf


\section{Block Subspaces of $T_k(d,\theta)$ isomorphic to $\ell_p$}\label{sec.blocksubspaces}

For the rest of this paper suppose that $d \theta > 1$ and let $p \in (1,\infty)$ be determined by
the equation $d\theta = d^{1/p}$. Bellenot proved that $T_1(d,\theta)$ is isomorphic to $\ell_p$
(see Theorem \ref{thmLCH}). The same result was then proved by Argyros and Deliyanni in
\cite{ArgyrosDeliyanni1} with different arguments which can be extended to more general cases like
ours. In this section we show that we can find many copies of $\ell_p$ spaces inside $T_k(d,\theta)$
for $k \ge 2$:

\bthm \label{thmBlockSubspaceslp}
Suppose that $(x_i)_{i=1}^\infty$ is a normalized block sequence in $T_k(d,\theta)$ and that we
can find a sequence $(v_i)_{i=1}^\infty \subset \ome{k}$ such that:
\begin{enumerate}
\item 
$v_1\leq x_1<v_2\leq x_2<v_3\leq x_3<v_4\leq x_4<\cdots$

\item
$\supp{x_i} \subset X_{v_i}^{\max}$ and $v_{i+1} \in X_{v_i}^{\max}$ for every $i\geq1$.

\end{enumerate}
Then, $(x_i)$ is equivalent to the basis of $\ell_p$.
\ethm

Notice that Corollary \ref{cor1LemElemXvmax} implies that for every $j\geq i$, $v_j\in X_{v_i}^{\max}$ and $\supp{x_j}\subset X_{v_i}^{\max}$.
Theorem \ref{thmBlockSubspaceslp} allows us to identify natural subspaces of $T_k(d,\theta)$ isomorphic to $\ell_p$.  For example, it implies
that the top trees of $T_k(d,\theta)$ are isomorphic to $\ell_p$.  
In section \ref{sec.embedding} we will see that the top trees are isometrically isomorphic to $T_1(d,\theta)$.

\bcor \label{corLastTreelp}
If $s \in \omle{k}$ and $\abs{s} = k-1$, then $T^k[s]$ is isomorphic to $\ell_p$.
\ecor

\begin{proof}
Suppose that $s=(s_1,\dots,c_{k-1})$.  Define $v_1=(s_1,\dots,c_{k-1},c_{k-1})$, $v_2=(s_1,\dots,c_{k-1},c_{k-1}+1)$, 
$v_3=(s_1,\dots,c_{k-1},c_{k-1}+2),\cdots$ and let $x_i=e_{v_i}$.  If follows from Lemma \ref{lemElemXvmax}
that the $(v_i)$'s and $(x_i)$'s satisfy the hypothesis of Theorem \ref{thmBlockSubspaceslp} and the result follows.
\end{proof}

The ``diagonal'' subspaces of $T_k(d,\theta)$ are also isomorphic to $\ell_p$ spaces.  For $s= (s_1,\dots,c_l) \in \omle{k}$ with $ l<k$, define 
\begin{align*}
v_1 &= (s_1,\dots,c_l,c_l,\dots,c_l)\\
v_2 &= (s_1,\dots,c_l,c_l+1,\dots,c_l+1)\\
v_3 &= (s_1,\dots,c_l,c_l+2,\dots,c_l+2)\\
\vdots & \hspace{3cm} \vdots
\end{align*}
and $x_i=e_{v_i}$ for $i\geq1$.  Then we use Lemma \ref{lemElemXvmax} to verify that the
$(v_i)$'s and $(x_i)$'s satisfy the hypothesis of Theorem \ref{thmBlockSubspaceslp} and we conclude

\begin{cor}
$D[s]=\overline{span}\{e_{v_i}:i\geq1\}$ is isomorphic to $\ell_p$, for every $s \in \oml{k}$
\end{cor}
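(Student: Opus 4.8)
The plan is to reduce the statement to Theorem \ref{thmBlockSubspaceslp}, exactly as was done for the top-tree case in Corollary \ref{corLastTreelp}: I would verify that the diagonal sequence $(v_i)_{i\ge 1}$ together with the vectors $x_i := e_{v_i}$ meets both hypotheses of that theorem, after which its conclusion gives that $(e_{v_i})$ is equivalent to the unit vector basis of $\ell_p$, i.e. that $D[s]$ is isomorphic to $\ell_p$. Writing $s = (s_1,\dots,s_{l-1},c_l)$ with $l = |s| < k$, the diagonal points are $v_i = (s_1,\dots,s_{l-1},c_l,c_l+i-1,\dots,c_l+i-1)$, where the last $k-l$ coordinates all equal $c_l+i-1$. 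Since each $e_{v_i}$ is a unit basis vector and the singletons $\{v_i\}$ are pairwise disjoint and $\prec$-successive, $(x_i)$ is automatically a normalized block sequence, so the only work left is the verification of hypotheses (1) and (2).

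First I would check hypothesis (1), the interlacing $v_1 \le x_1 < v_2 \le x_2 < \cdots$. Because $\supp{x_i} = \{v_i\}$, the relations $v_i \le x_i$ hold trivially and the chain collapses to showing $v_i \prec v_{i+1}$ for all $i$. By Definition \ref{defPrec} this is immediate: the final coordinate of $v_i$ is $c_l+i-1$, strictly smaller than the final coordinate $c_l+i$ of $v_{i+1}$, so comparison of last coordinates alone already decides the order.

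Next I would verify hypothesis (2). The inclusion $\supp{x_i} = \{v_i\} \subset X_{v_i}^{\max}$ is immediate from Definition \ref{defn.Xmax}, since $v_i = \widehat{X}_{v_i}(0,\dots,0)$ is the $\prec$-least element of $X_{v_i}^{\max}$. The substantive point --- and essentially the only place any real checking is needed --- is the membership $v_{i+1} \in X_{v_i}^{\max}$, which I would obtain from the characterization in Lemma \ref{lemElemXvmax} applied with $v = v_i = (n_1,\dots,n_k)$ and $w = v_{i+1} = (m_1,\dots,m_k)$. Here $n_k = c_l + i - 1$, and taking the index $j = l$ in the lemma one has $(m_1,\dots,m_l) = (s_1,\dots,s_{l-1},c_l) = (n_1,\dots,n_l)$; since $l < k$, the remaining condition to verify is $m_{l+1} > n_k$, and indeed $m_{l+1} = c_l + i > c_l + i - 1 = n_k$. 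Thus the lemma yields $v_{i+1} \in X_{v_i}^{\max}$.

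With both hypotheses of Theorem \ref{thmBlockSubspaceslp} confirmed, the theorem applies and gives that $(e_{v_i})$ is equivalent to the unit vector basis of $\ell_p$, completing the proof. I expect no genuine obstacle here: the argument is a direct specialization of the reasoning already used in Corollary \ref{corLastTreelp}, the only nonmechanical ingredient being the correct choice of the matching index $j = l$ when invoking Lemma \ref{lemElemXvmax}.
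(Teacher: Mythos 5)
Your proposal is correct and follows exactly the paper's intended argument: the paper itself proves this corollary by defining the diagonal points $v_i$ and invoking Lemma \ref{lemElemXvmax} to check the two hypotheses of Theorem \ref{thmBlockSubspaceslp}, which is precisely what you do. Your write-up simply supplies the details (in particular the verification $m_{l+1} = c_l+i > c_l+i-1 = n_k$) that the paper leaves as routine.
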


\vspace{5pt}
We prove Theorem \ref{thmBlockSubspaceslp} in two steps.  First we prove the lower $\ell_p$-estimate using Bellenot's space  
$T_1(d,\theta)$.  Then we prove the upper $\ell_p$-estimate in a more general case. 
Denote by $(t_i)$ the canonical basis of $T_1(d, \theta)$. In order to avoid confusion, we will
write $\norm{\cdot}_1$ to denote the norm on $T_1(d, \theta)$.

\bprop \label{propLowerlpEstimate}
Under the same hypotheses of Theorem \ref{thmBlockSubspaceslp}, for a finitely supported 
$z=\sum_ia_ix_i\in T_k(d,\theta)$, we have:
\vspace{4pt}
\beqs
\frac{1}{2d} \left( \sum\nolimits_i \abs{a_i}^p \right)^{1/p} \leq \norm{\sum\nolimits_i a_i t_i}_1
\leq \norm{\sum\nolimits_i a_i x_i}_{T_k(d,\theta)}.
\eeqs
\eprop
\bpf

Let $x = \sum_i a_i t_i\in T_1(d,\theta)$. Following Bellenot \cite{Bellenot}, either $\norm{x}_1 = \max_i |a_i|$,
or there exist $m \in \set{1,\ldots,d}$ and $E_1 < E_2 < \cdots < E_m$ such that $\norm{x}_1 =
\sum_{j=1}^m \theta \norm{E_j x}_1$. For each $j \in \set{1,\ldots,m}$, either $\norm{E_j x}_1 =
\max_i \{|a_i| : i \in E_j\}$, or there exist $m^\p \in \set{1,\ldots,d}$ and $E_{j1} < E_{j2} <
\cdots < E_{jm^\p}$ subsets of $E_j$ such that $\norm{E_j x}_1 = \sum_{l=1}^{m^\p} \theta
\norm{E_{jl} x}_1$. Since the sequence $(a_i)$ has only finitely many non-zero terms, this process
ends and $x$ is normed by a tree.

We will prove the result by induction on the height of the tree. If $\norm{x}_1 =
\max_i |a_i|$, the result follows.  Since the basis of $T_k(d,\theta)$ is unconditional and the $(x_i)$'s are a normalized block basis of $T_k(d,\theta)$,
we have that $\max_i |a_i| \leq \norm{\sum_i a_i x_{i}}$. 

Suppose that the result is proved for elements of $T_1(d,\theta)$ that are normed by trees of
height less than or equal to $h$ and that $x$ is normed by a tree of height $h+1$. Then, there
exist $m \in \set{1,\ldots,d}$ and $E_1 < E_2 < \cdots < E_m$ such that $\norm{x}_1 = \sum_{j=1}^m
\theta \norm{E_j x}_1$ and each $E_j x$ is normed by a tree of height less than or equal to $h$.

We will find a corresponding admissible sequence in $\mathcal{AR}^k$.
For each $j \in \set{1,\ldots,m}$, let $n_j = \min(E_j)$ and define
$$F_j=\{w\in X_{v_{n_j}}^{\max} : w\prec v_{n_{j+1}}\}\in\mathcal{AR}^k.$$
Then $F_1<F_2,\cdots<F_m$, and we conclude that $(F_j)$ is {\it d-admissible}.  Moreover we easily check from the
hypothesis of Theorem \ref{thmBlockSubspaceslp} and by Corollary \ref{cor1LemElemXvmax} that
if $i\in E_j$, then $\supp{x_i}\subset F_j$.  
Hence, using the induction hypothesis and the unconditionality of the basis of $T_k(d,\theta)$ we conclude that
\beqs
\norm{E_j x}_1 = \norm{\sum\nolimits_{l \in E_j} a_l t_l}_1
             \leq \norm{\sum\nolimits_{l \in E_j} a_l x_{l}}
	\leq\norm{ F_j z}.
\eeqs
 Therefore,
\beqs
\norm{\sum\nolimits_i a_i t_i}_1 =    \sum_{j=1}^m \theta \norm{E_j x}_1 
                                 \leq \sum_{j=1}^m \theta \norm{F_j z} 
                                 =    \norm{\sum\nolimits_i a_i x_{i}}.
\eeqs
The result follows now applying Theorem \ref{thmLCH}.
\epf

The proof of the upper bound inequality of Theorem \ref{thmBlockSubspaceslp} is harder and we need some preliminary results.

\subsection{Alternative Norm}\label{alternativeNorm}
To establish a upper $\ell_p$-estimate we will adapt an alternative and useful description of
the norm on $T_1(d,\theta)$ introduced by Argyros and Deliyanni \cite{ArgyrosDeliyanni1} to our
spaces. In that regard, the following definition plays a key role.

\bdefn \label{defAlmostAdmissible}
Let $m \in \set{1,\ldots,d}$. A sequence $(F_i)_{i=1}^m \subset \fin(\ome{k})$ is called
\textit{almost admissible} if there exists a $d$-admissible sequence $(E_n)_{n=1}^m$ in $\mathcal{AR}^k$ such
that $F_i \subseteq E_i$, for $i\leq m$.
\edefn

A standard alternative description of the norm of the space $T_k(d,\theta)$, closer to the
spirit of Tsirelson space, is as follows. Let $K_0 := \set{\pm e_i^\ast : i \in \Zp}$, and for
$n \in \mbN$,
\beqs
K_{n+1} := K_n \bigcup \set{\theta(f_1 + \cdots + f_m) : m \leq d, (f_i)_{i=1}^m \subset K_n},
\eeqs

\noindent where $(\supp{f_i})_{i=1}^m$ is almost admissible. Then, set $K := \bigcup_{n \in \mbN}
K_n$. Now, for each $n \in \mbN$ and fixed $x \in \czz(\ome{k})$, define the following
non-decreasing sequence of norms:
\beqs
\abs{x}_n^\ast := \max \set{f(x) : f \in K_n}.
\eeqs

\blem \label{lemDualNorms}
For every $n \in \mbN$ and $x \in \czz(\ome{k})$ we have $\abs{x}_n = \abs{x}_n^\ast$.
\elem
\bpf
Clearly, $\abs{x}_0 = \abs{x}_0^\ast$ for every $x \in \czz(\ome{k})$. So, let $n \in \Zp$. Suppose
$\abs{y}_j = \abs{y}_j^\ast$ for every $j \in \mbN, j < n$ and every $y \in \czz(\ome{k})$.

If $\abs{x}_n = \abs{x}_{n-1}$, then $\abs{x}_n = \abs{x}_{n-1}^\ast \leq \abs{x}_n^\ast$.
Suppose $\abs{x}_n \neq \abs{x}_{n-1}$. Let $m \in \set{1,\ldots,d}$ and $(E_i)_{i=1}^m \subset
\mcAR^k$ be an admissible sequence such that $\abs{x}_n = \theta \sum_{i=1}^m \abs{E_i x}_{n-1}$.
Then, $\abs{x}_n = \theta \sum_{i=1}^m \abs{E_i x}_{n-1}^\ast = \theta \sum_{i=1}^m f_i(E_i x)$
for some $(f_i)_{i=1}^m \subset K_{n-1}$. Define, for each $i \in \set{1,\ldots,m}$, a new
functional $f_i^\prime$ satisfying $f_i^\prime(y) = f_i(E_i y)$ for every $y \in \czz(\ome{k})$.
This implies that $\supp{f_i^\prime} = \supp{f_i^\prime} \cap E_i$. Then, $(f_i^\prime)_{i=1}^m
\subset K_{n-1}$ with $(\supp{f_i^\prime})_{i=1}^m$ almost admissible and $f_i^\prime(E_i x) =
f_i(E_i x)$. So,
\vspace{3pt}
\beqs
\theta \sum_{i=1}^m f_i(E_i x) = \theta \sum_{i=1}^m f_i^\prime(E_i x)
                               \leq \abs{E_i x}_n^\ast
                               \leq \abs{x}_n^\ast;
\eeqs

\noindent therefore, $\abs{x}_n \leq \abs{x}_n^\ast$.

Now, let $f = \theta(f_1+\cdots+f_m)$ for some $m \in \set{1,\ldots,d}$ and $(f_i)_{i=1}^m \subset
K_{n-1}$ with $(\supp{f_i})_{i=1}^m$ almost admissible. Then,
\vspace{3pt}
\beqs
f(x) = \theta \sum_{i=1}^m f_i(x) \leq \theta \sum_{i=1}^m \abs{\supp{f_i} x}_{n-1}^\ast
                                  = \theta \sum_{i=1}^m \abs{\supp{f_i} x}_{n-1}.
\eeqs

Since $(\supp{f_i})_{i=1}^m$ is almost admissible, there exists an admissible sequence
$(E_i)_{i=1}^d \subset \mcAR^k$ such that $\supp{f_i} \subseteq E_{n_i}$, where $n_1,\ldots,n_m \in
\set{1,\ldots,m}$ and $n_1 < \cdots < n_m$. So,
\beqs
\theta \sum_{i=1}^m \abs{\supp{f_i} x}_{n-1} \leq \theta \sum_{i=1}^m \abs{E_{n_i} x}_{n-1} \leq 
\abs{x}_n;
\eeqs
hence, by definition of $\abs{\cdot}_n^\ast$, we conclude that $\abs{x}_n^\ast \leq \abs{x}_n$.
\epf

Consequently, an alternative description of the norm on $T_k(d,\theta)$ is:

\bprop \label{propAlternativeNorm}
For every $x \in T_k(d,\theta)$,
\vspace{6pt}
\beqs
\norm{x} = \sup \set{f(x) : f \in K}.
\eeqs
\eprop

\subsection{Upper Bound for Theorem \ref{thmBlockSubspaceslp}}
For $m \in \set{1,\ldots,d}$ we say that $f_1,\ldots,f_m \in K$ are successive if $\supp{f_1} <
\supp{f_2} < \cdots < \supp{f_m}$.

If $f \in K$, then there exists $n \in \mbN$ such that $f \in K_n$. The ``complexity'' of
$f$ increases as $n$ increases. That is to say, for example, that the complexity of $f \in K_1$ is
less than that of $g \in K_{10}$. This is captured in the following definition.

\bdefn \label{defAnalysis}
Let $n \in \Zp$ and $\phi \in K_n \setminus K_{n-1}$. An \textit{analysis of} $\phi$ is a sequence
$(K_l(\phi))_{l=0}^n$ of subsets of $K$ such that:
\begin{enumerate}
	\item $K_l(\phi)$ consists of successive elements of $K_l$ and $\bigcup_{f \in K_l(\phi)}
	      \supp{f} = \supp{\phi}$.
				
	\item If $f \in K_{l+1}(\phi)$, then either $f \in K_l(\phi)$ or there exist $m \in \set{1,
	      \ldots,d}$ and successive $f_1,\ldots,f_m \in K_l(\phi)$ with $(\supp{f_i})_{i=1}^m$ almost
				admissible and $f = \theta(f_1+\cdots+f_m)$.
	
	\item $K_n(\phi) = \set{\phi}$.
\end{enumerate}
\edefn

Note that, by definition of the sets $K_n$, each $\phi \in K$ has an analysis. Moreover, if $f_1 \in
K_l(\phi)$ and $f_2 \in K_{l+1}(\phi)$, then either $\supp{f_1} \subseteq \supp{f_2}$ or
$\supp{f_1} \cap \supp{f_2} = \emptyset$.

Let $\phi \in K_n \setminus K_{n-1}$ and let $(K_l(\phi))_{l=0}^n$ be a fixed analysis of $\phi$.
Suppose $(x_j)_{j=1}^N$ is a finite block sequence on $T_k(d,\theta)$.

Following \cite{ArgyrosDeliyanni1}, for each $j \in \set{1,\ldots,N}$, set $l_j \in \set{0,\ldots,
n-1}$ as the smallest integer with the property that there exists at most one $g \in K_{l_j+1}(\phi)$
with $\supp{x_j} \cap \supp{g} \neq \emptyset$.

Then, define the \textit{initial part} and \textit{final part} of $x_j$ with respect to 
$(K_l(\phi))_{l=0}^n$, and denote them respectively by $x_j^\p$ and $x_j^{\p\p}$, as follows. Let
\vspace{3pt}
\beqs
\set{f \in K_{l_j}(\phi) : \supp{f} \cap \supp{x_j} \neq \emptyset} = \set{f_1,\ldots,f_m},
\eeqs

\noindent where $f_1,\ldots,f_m$ are successive. Set
\vspace{3pt}
\beqs
x_j^\p = (\supp{f_1}) x_j
\hspace{1.5cm} \text{and} \hspace{1.5cm}
x_j^{\p\p} = (\cup_{i=2}^m \, \supp{f_i}) x_j.
\eeqs

The following is a useful property of the sequence $(x_j^\p)_{j=1}^N$ (see \cite{BernuesDeliyanni}).
The analogous property is true for $(x_j^{\p\p})_{j=1}^N$.

\bprop \label{propAl}
For $l \in \set{1,\ldots,n}$ and $j \in \set{1,\ldots,N}$, set
\vspace{5pt}
\beqs
A_l(x_j^\p) := \set{f \in K_l(\phi) : \supp{f} \cap \supp{x_j^\p} \neq \emptyset}.
\eeqs
Then, there exists at most one $f \in A_l(x_j^\p)$ such that $\supp{f} \cap \supp{x_i^\p}
\neq \emptyset$ for some $i \neq j$.
\eprop
\bpf
Let $A_l(x_j^\p) = \set{f_1,\ldots,f_m}$, where $m \geq 2$ and $f_1,\ldots,f_m$ are successive.
Obviously, only $\supp{f_1}$ and $\supp{f_m}$ could intersect $\supp{x_i^\p}$ for some $i \neq j$.
We will prove that it is not possible for $f_m$.

Suppose, towards a contradiction, that $\supp{f_m} \cap \supp{x_i^\p} \neq \emptyset$ for some $i
> j$. Given that $m \geq 2$, we must have $l \leq l_j$. Consequently, there exists $g \in
K_{l_j}(\phi)$ such that $\supp{f_m} \subseteq \supp{g}$. Since $\supp{g} \cap \supp{x_j} \neq
\emptyset$ and $\supp{g} \cap \supp{x_i} \neq \emptyset$ for some $i > j$, the definition of
$x_j^{\p\p}$ implies that $\supp{g} \cap \supp{x_j} \subseteq \supp{x_j^{\p\p}}$. Therefore,
$\supp{f_m} \cap \supp{x_j^\p} = \emptyset$, a contradiction.
\epf

Following \cite{Argyros/TodorcevicBK} and \cite{BernuesDeliyanni} we now provide an upper
$\ell_p$-estimate that implies the upper $\ell_p$-estimate of Theorem \ref{thmBlockSubspaceslp}:

\bprop \label{propUpperlpEstimate}
Let $(x_j)_{j=1}^N$ be a finite normalized block basis on $T_k(d,\theta)$. Denote by 
$(t_n)_{n=1}^\infty$ the canonical basis of $T_1(d, \theta)$. Then, for any $(a_j)_{j=1}^N
\subset \mbR$, we have:
\beqs
\norm{\sum_{j=1}^N a_j x_j} \leq \frac{2}{\theta} \left( \sum_{j=1}^N \abs{a_j}^p \right)^{1/p}
\eeqs
\eprop
\bpf
In order to avoid confusion, we will write $\norm{\cdot}_1$ to denote the norm on $T_1(d, \theta)$.
By Proposition \ref{propAlternativeNorm} and Theorem \ref{thmLCH} it suffices to show that for
every $\phi \in K$,
\beqs
\abs{\phi \left(\sum_{j=1}^N a_j x_j \right)} \leq \frac{2}{\theta} \norm{\sum_{j=1}^N a_j t_j}_1.
\eeqs

By unconditionality we can assume that $x_1,\ldots,x_N$ and $\phi$ are positive. Suppose $\phi \in
K_n \setminus K_{n-1}$ for some $n \in \Zp$, and let $(K_l(\phi))_{l=0}^n$ be an analysis of
$\phi$ (see Definition \ref{defAnalysis}). Next, split each $x_j$ into its initial and final part,
$x_j^\p$ and $x_j^{\p\p}$, with respect to $(K_l(\phi))_{l=0}^n$.

We will show by induction on $l \in \set{0,1,\ldots,n}$ that for all $J \subseteq \set{1,\ldots,N}$
and all $f \in K_l(\phi)$ we have
\vspace{4pt}
\beqs
\abs{f \left(\sum_{j \in J} a_j x_j^\p \right)} \leq \frac{1}{\theta} \norm{\sum_{j \in J}
     a_j t_j}_1
\hspace{0.5cm} \text{and} \hspace{0.5cm}
\abs{f \left(\sum_{j \in J} a_j x_j^{\p\p} \right)} \leq \frac{1}{\theta} \norm{\sum_{j \in J}
     a_j t_j}_1.
\eeqs

We prove the first inequality given that the other one is analogous. Let $J \subseteq
\set{1,\ldots,N}$ and set $y = \sum_{j \in J} a_j x_j^\p$.

If $f \in K_0(\phi)$, then $f = e_i^\ast$ for some $i \in \Zp$. We want to prove that
\vspace{3pt}
\beqs
\abs{e_i^\ast(y)} \leq \frac{1}{\theta} \norm{\sum\nolimits_{j \in J} a_j t_j}_1.
\eeqs

Suppose that $e_i^\ast(y) \neq 0$. So, there exists exactly one $j_i \in J$ such that
$e_i^\ast(x_{j_i}^\p) \neq 0$. Applying Proposition \ref{propAlternativeNorm} we have
\vspace{3pt}
\beqs
\abs{e_i^\ast(y)} = \abs{e_i^\ast(a_{j_i} x_{j_i}^\p)} \leq \norm{a_{j_i} x_{j_i}^\p} \leq
\abs{a_{j_i}} \norm{x_{j_i}} \leq \norm{\sum\nolimits_{j \in J} a_j t_j}_1
\eeqs

\noindent since the basis of $T_k(d,\theta)$ is unconditional, $\norm{x_{j_i}} = 1$, and by
definition
\vspace{3pt}
\beqs
\max_{j \in J} \abs{a_j} \leq \norm{\sum\nolimits_{j \in J} a_j t_j}_1.
\eeqs

Now suppose that the desired inequality holds for any $g \in K_l(\phi)$. We will prove it for
$K_{l+1}(\phi)$. Let $f \in K_{l+1}(\phi)$ be such that $f = \theta(f_1+\cdots+f_m)$, where
$f_1,\ldots,f_m$ are successive elements in $K_l(\phi)$ with $(\supp{f_i})_{i=1}^m$ almost
admissible. Then, $1 \leq m \leq d$. Without loss of generality assume that $f_i(y) \neq 0$ for
each $i \in \set{1,\ldots,m}$. Define the following sets:
\vspace{3pt}
\begingroup \makeatletter \def \f@size{11} \check@mathfonts
\beqs
I^\p := \set{i \leq m : \exists j \in J \text{ with } f_i(x_j^\p) \neq 0
       \text{ and } \supp{f} \cap \supp{x_j^\p} \subseteq \supp{f_i}}
\eeqs
\endgroup

\noindent and
\vspace{3pt}
\beqs
J^\p := \set{j \in J : \exists i \in \set{1,\ldots,m-1} \text{ such that } f_i(x_j^\p) \neq 0
       \text{ and } f_{i+1}(x_j^\p)\neq 0}.
\eeqs

We claim that $\abs{I^\p} + \abs{J^\p} \leq m$. Indeed, if $j \in J^\p$, 
there exists $i \in \set{1,\ldots,m-1}$ such that $f_i(x_j^\p) \neq 0$ and 
$f_{i+1}(x_j^\p) \neq 0$.  From the proof of Proposition
\ref{propAl} it follows that $f_{i+1}(x_h^\p) = 0$ for every $h \neq j$, which implies that $i+1 \notin I^\p$.
Hence, we can define an injective map from $J^\p$ to $\set{1,\ldots,m} \setminus I^\p$ and we conclude that 
$\abs{I^\p} + \abs{J^\p} \leq m$.

Finally, for each $i \in I^\p$, set $D_i := \set{j \in J : \supp{f} \cap \supp{x_j^\p} \subseteq
\supp{f_i}}$. Notice that for all $i \in I^\p$ we have $D_i \cap J^\p = \emptyset$. Then,
\vspace{3pt}
\beqs
f(y) = \theta \left[\sum\nolimits_{i \in I^\p} f_i \left(\sum\nolimits_{j \in D_i} a_j x_j^\p 
       \right) + \sum\nolimits_{j \in J^\p} f(a_j x_j^\p) \right],
\eeqs
and consequently
\beqs
\abs{f(y)} \leq \theta \left[\sum\nolimits_{i \in I^\p} \abs{f_i \left(\sum\nolimits_{j \in D_i}
           a_j x_j^\p \right)} + \sum\nolimits_{j \in J^\p} \abs{f(a_j x_j^\p)} \right].
\eeqs

However, by the induction hypothesis,
\vspace{3pt}
\beqs
\abs{f_i \left(\sum\nolimits_{j \in D_i} a_j x_j^\p \right)} \leq \frac{1}{\theta} 
     \norm{\sum\nolimits_{j \in D_i} a_j t_j}_1.
\eeqs

Moreover, for each $j \in J^\p$, we have $\abs{f(a_j x_j^\p)} \leq \norm{a_j x_j^\p} \leq 
\norm{a_j x_j} = \abs{a_j} = \norm{a_j t_j}_1$. Hence,
\vspace{3pt}
\begin{align*}
\abs{f(y)} & \leq \theta \left[\frac{1}{\theta} \sum\nolimits_{i \in I^\p}
                  \norm{\sum\nolimits_{j \in D_i} a_j t_j}_1 + \frac{1}{\theta} \sum\nolimits_{
                  j \in J^\p} \norm{a_j t_j}_1 \right] \\
           & = \theta \left[\frac{1}{\theta} \sum\nolimits_{i \in I^\p} \norm{D_i \left( 
               \sum\nolimits_{j \in J} a_j t_j \right)}_1 + \frac{1}{\theta}
               \sum\nolimits_{j \in J^\p} \norm{a_j t_j}_1 \right].
\end{align*}

Given that for every $i \in I^\p, D_i \cap J^\p = \emptyset$ and $\abs{I^\p} + \abs{J^\p} \leq m
\leq d$, the family $\{D_i\}_{i \in I^\p} \cup \{\{j\}\}_{j \in J^\p}$ is $d$-admissible in $\mathcal{AR}^1$. So,
by the definition of $\norm{\cdot}_1$, we have
\vspace{8pt}

\begingroup \makeatletter \def \f@size{11} \check@mathfonts
\beqs
\begin{split}
\abs{f(y)} &  \leq \theta \left[\frac{1}{\theta} \sum\nolimits_{i \in I^\p} \norm{D_i \left(
                \sum\nolimits_{j \in J} a_j t_j \right)}_1 + \frac{1}{\theta}
                \sum\nolimits_{j \in J^\p} \norm{a_j t_j}_1 \right]                  \\
           &  \leq \frac{1}{\theta} \norm{\sum\nolimits_{j \in J} a_j t_j}_1.
\end{split}
\eeqs
\endgroup

\epf


\section{$T_k(d,\theta)$ is $\ell_p$-saturated}\label{sec.l_psat}

In this section we prove that every infinite dimensional subspace of $T_k(d,\theta)$ has a subspace
isomorphic to $\ell_p$.

Recall that the subspaces $T^k[s]$ for $s \in \omle{k}$ with $\abs{s} < k$ decompose naturally into
countable sums. Namely, if $s = (a_1, a_2, \ldots, a_l) \in \omle{k}$ and $l< k$, then
$\tau^k[s] = \bigcup_{j=a_l}^\infty \tau^k[s \upSmallFrown j]$, and therefore $T^k[s] =
\sum_{j=a_l}^\infty \oplus T^k[s \upSmallFrown j]$.

The next lemma tells us that we can find elements $v \in \tau^k[s]$ such that $X_v^{\max}$ contains
arbitrary tails of the decomposition of $\tau^k[s]$. Its proof follows from the definition of the
$\mcE_k$-tree $\widehat{X}_v$ that determines $X_v^{\max}$ (see paragraph preceding Lemma
\ref{lemElemXvmax}).

\blem \label{lemTails}
Let $s = (a_1, a_2, \ldots, a_l) \in \omle{k}$ with $l< k$. If $m \in \mbN$ with $m > a_l$
and $v = s \upSmallFrown (m, m, \ldots, m) \in \ome{k}$, then $X_v^{max} \cap \tau^k[s] =
\bigcup_{j=m}^\infty \tau^k[s \upSmallFrown j]$.
\elem

We now present the main result of this section:

\bthm
Suppose that $Z$ is an infinite dimensional subspace of $T_k(d,\theta)$. Then, there exists $Y
\subseteq Z$ isomorphic to $\ell_p$.
\ethm
\bpf
Let $Z$ be an infinite dimensional subspace of $T_k(d,\theta)$. After a standard perturbation
argument, we can assume that $Z$ has a normalized block basic sequence $(x_n)$.  

We will show that a subsequence of $(x_n)$ is isomorphic to $\ell_p$.  From Proposition
\ref{propUpperlpEstimate} we have that
\vspace{4pt}
\beqs
\norm{\sum\nolimits_n a_n x_n} \leq \frac{2}{\theta} \left( \sum\nolimits_n |a_n|^p \right)^{1/p}.
\eeqs

To obtain the lower bound we will find a subsequence and a projection $Q$ onto a subspace of the
form $T^k[s]$ such that $\left(Q\left(x_{n_j}\right)\right)$ has a lower $\ell_p$-estimate.

To this end, assume that $Z \subset T^k[s]$ for some $s \in \omle{k}$ with $\abs{s} < k$. Decompose
$T^k[s] = \sum_{j=1}^\infty \oplus T^k[s_j]$, where for each $j \in \Zp, s \sqsubset s_j,
\abs{s_j} = \abs{s}+1$, and $s_j \prec s_{j+1}$. For each $j \in \Zp$ let $Q_j: T^k[s] \to T^k[s_j]$
be the projection onto $T^k[s_j]$. Then we have the two cases:

\vspace{2mm}
\noindent \underline{Case 1:} $\forall j \in \Zp, Q_j x_n \to 0$.

\vspace{2mm}
\noindent \underline{Case 2:} $\exists j_0 \in \Zp$ such that $Q_{j_0} x_n \not\to 0$.
\vspace{2mm}

Let us look at Case 1 first.  Let $v_1$ be the first element of $\tau^k[s]$. Since there exists $p_1$
such that  $\supp{x_1} \subset \bigcup_{j=1}^{p_1} \tau^k[s_j]$, applying Lemma \ref{lemTails} we
can find $q_1 > p_1$ and $v_2 \in \tau^k[s]$ such that $v_1 \leq x_1 < v_2$ and $X_{v_2}^{\max} \cap
\tau^k[s] = \bigcup_{j=q_1}^\infty \tau^k[s_j]$. Since $Q_j x_n \to 0$ for $1 \leq j \leq q_1$ we
can find $n_2 > 1$ and $y_2 \in T^k[s]$ such that $y_2 \approx x_{n_2}$ and $Q_j y_2 = 0$ for $1
\leq j \leq q_1$.  Then we have
\vspace{4pt}
\beqs
v_1 \leq x_1 < v_2 < y_2 \ {\rm and} \ \supp{y_2} \subset X_{v_2}^{\max}.
\eeqs

We now repeat the argument. Since there exists $p_2$ such that  $\supp{y_2} \subset 
\bigcup_{j=1}^{p_2} \tau^k[s_j]$, applying Lemma \ref{lemTails} we can find $q_2 > p_2$
and $v_3 \in \tau^k[s]$ such that $v_2 < y_2 < v_3$ and $X_{v_3}^{\max} \cap \tau^k[s] = 
\bigcup_{j=q_2}^\infty \tau^k[s_j]$. Since $Q_j x_n\to 0$ for $1 \leq j \leq q_2$, we can find $n_3
> n_2$ and $y_3 \in T^k[s]$ such that $y_3 \approx x_{n_3}$ and $Q_j y_3 = 0$ for $1 \leq j \leq
q_2$. Then we have
\vspace{4pt}
\beqs
v_1 \leq x_1 < v_2 < y_2 < v_3 < y_3 \ {\rm and} \ \supp{y_2} \subset X_{v_2}^{\max}, 
                                                   \supp{y_3} \subset X_{v_3}^{\max}.
\eeqs

Proceeding this way we find a subsequence $(x_{n_i})$ and a sequence $(y_i)$ such that $y_i$ is
close enough to $x_{n_i}$. Consequently, $\Span{y_i} \approx \Span{x_{n_i}}$ and
\vspace{4pt}
\beqs 
v_1 \leq x_1 < v_2 < y_2 < v_3 < y_3 < \cdots \ {\rm and} \ \supp{y_i} \subset X_{v_i}^{\max} \ 
{\rm for} \ i>1.
\eeqs

By Proposition \ref{propLowerlpEstimate}, there exist $C_1,C_2 \in \mbR$ such that
\vspace{3pt}
\beqs
\norm{\sum\nolimits_i a_i x_{n_i}} \geq C_1 \norm{\sum\nolimits_i a_i y_{n_i}} \geq
C_2 \left( \sum\nolimits_i|a_i|^p \right)^{1/p}.
\eeqs

Let us look at Case 2 now. Find a subsequence $(n_i)$ and $\delta > 0$ such that $\delta \leq
\norm{Q_{j_0} x_{n_i}} \leq 1$.

Let $W = \Span{Q_{j_0} x_{n_i}}$. We now apply the argument in Case 1 to the sequence $Q_{j_0}
x_{n_1} < Q_{j_0} x_{n_2} < Q_{j_0} x_{n_3} < \cdots$. That is, first decompose $T^k[s_{j_0}] =
\sum_{j=1}^\infty \oplus T^k[t_j]$, where for every $j \in \Zp, s_{j_0} \sqsubset t_j, \abs{t_j} =
\abs{s_{j_0}}+1, t_j \prec t_{j+1}$. Then, look at the two cases for the sequence $\left( Q_{j_0}
x_{n_i} \right)$. If Case 1 is true, $\left( Q_{j_0} x_{n_i} \right)$ has a subsequence with a
lower $\ell_p$-estimate, and therefore $\left( x_{n_i} \right)$ has a subsequence with a lower
$\ell_p$-estimate; and if Case 2 is true, we can repeat the argument for some $t_j$ that has length
strictly larger than the length of $s_{j_0}$. If Case 1 continues to be false, after a finite
number of iterations of the same argument, the length of $t_j$ will be equal to $k-1$, and
therefore, applying Corollary \ref{corLastTreelp}, $T^k[t_j]$ would be isomorphic to $\ell_p$.
The result follows.
\epf


\section{The spaces $T_k(d,\theta)$ are not isomorphic to each other}\label{sec.notiso}

In this section we prove one of the main results of the paper

\bthm \label{isomorphismThm1}
If $k_1 \not= k_2$, then $T_{k_1}(d,\theta)$ is not isomorphic to $T_{k_2}(d,\theta)$.
\ethm

The proof goes by induction and it shows that when $k_1>k_2$, $T_{k_1}(d,\theta)$
does not embed in $T_{k_2}(d,\theta)$.  The idea is that if we had an isomorphic embedding, we
would map an $\ell_\infty^N$-sequence into an $\ell_p^N$-sequence for arbitrarily large $N$'s.
The induction step requires a stronger and more technical statement that appears in Proposition \ref{embedding_prop} below.

The proof uses the notation of the trees $\tau^k[s]$ and their Banach spaces
$T^k[s]$ (see Section \ref{sec.subspaceslNinfty}).  We start with some lemmas.  The first one is an easy consequence of the fact that the basis of 
$T_{k}(d,\theta)$ is 1-unconditional. 

\blem\label{decompose_trees}
If $s\in\omle{k}$, and $|s|<k$, there exist $s_1 \prec s_2 \prec s_3\prec\cdots$ such that $|s_i|=|s|+1$ and
$\tau^k[s]=\bigcup_{i=1}^\infty \tau^k[s_i]$.  Consequently, we decompose $T^k[s]=\sum_{i=1}^\infty\oplus T^k[s_i]$
and for $m\in\Zp$, there is a canonical projection $P_m:T^k[s]\to \sum_{i=1}^m \oplus T^k[s_i]$.
\elem

\bpf
If $s=(a_1,\dots,a_l)$, then $s_1=(a_1,\dots,a_l,a_l), s_2=(a_1,\dots,a_l,a_l+1), s_3=(a_1,\dots,a_l,a_l+2),\cdots$.
\epf

\blem\label{tree_max_approx}
Let $s\in\omle{k}$ with $|s|<k$.  Let $v=\min\tau^k[s]$.  Then $\tau^k[s]\subset X_v^{\max}$.
\elem

\bpf 
If $s=(a_1,\dots,a_l)$, then we have that $v=(a_1,\dots,a_l,a_l,\dots,a_l)$ and the result follows 
from Lemma \ref{lemElemXvmax}.
\epf

We are ready to state and prove the main proposition.

\begin{prop}\label{embedding_prop}
Let $s\in\omle{k_1}$ with $|s|<k_1$ and decompose $T^{k_1}[s]=\sum_{i=1}^\infty\oplus T^{k_1}[s_i]$
according to Lemma \ref{decompose_trees}.  Let $M\in\Zp$ and $t_1,\dots,t_M\in\omle{k_2}$ such that
$|t_1|=\cdots=|t_M|<k_2$.  

If $k_1-|s|>k_2-|t_1|$, then for every $n\in\Zp$, $\sum_{i=n}^\infty\oplus T^{k_1}[s_i]$ does
not embed into $T^{k_2}[t_1] \oplus \cdots \oplus T^{k_2}[t_M]$.
\end{prop}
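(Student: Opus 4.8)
The plan is to prove Proposition~\ref{embedding_prop} by induction on the quantity $h_2:=k_2-|t_1|$, the common ``height'' of the target trees $T^{k_2}[t_j]$; writing $h_1:=k_1-|s|$ for the height of $T^{k_1}[s]$, the hypothesis reads simply $h_1>h_2$. The engine throughout is that the source tail contains arbitrarily large uniform copies of $\ell_\infty^N$: since $h_1>h_2\ge 1$ we have $|s_i|=|s|+1<k_1$, so for any $N$ the nodes $s_n\prec\cdots\prec s_{n+N-1}$ have equal length $<k_1$, and choosing normalized $x_l\in T^{k_1}[s_{n+l-1}]$ supported beyond a common $v\succ s_{n+N-1}$, Theorem~\ref{thmSubspaceslinfty} makes the linear span of $x_1,\dots,x_N$ a copy of $\ell_\infty^N$ with constant $\tfrac{\theta(d-1)}{1-\theta}$ independent of $N$. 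Thus $\sum_{i\ge n}\oplus T^{k_1}[s_i]$ contains $\ell_\infty^N$ uniformly. (Feeding the proposition with $s=t_1=()$, $M=1$, $n=1$ then yields Theorem~\ref{isomorphismThm1}.)

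For the base case $h_2=1$ we have $|t_j|=k_2-1$, so Corollary~\ref{corLastTreelp} gives $T^{k_2}[t_j]\cong\ell_p$ for each $j$, whence the finite direct sum $T^{k_2}[t_1]\oplus\cdots\oplus T^{k_2}[t_M]$ has finite cotype $q=\max(2,p)$. A space of finite cotype cannot contain $\ell_\infty^N$ uniformly: the cotype inequality applied to a normalized $K$-copy of the $\ell_\infty^N$-basis forces $N^{1/q}\le C_qK$, which fails for large $N$. Since the source tail does contain such copies, it cannot embed. This is precisely the ``$\ell_\infty^N$ into $\ell_p^N$'' obstruction.

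For the inductive step, assume $h_2\ge2$ and suppose toward a contradiction that $R\colon\sum_{i\ge n}\oplus T^{k_1}[s_i]\to\bigoplus_{j=1}^M T^{k_2}[t_j]$ is an isomorphic embedding. I would fix a single source summand $T^{k_1}[s_{i_0}]$, of height $h_1-1>h_2-1$, decompose each target tree into its top layer $T^{k_2}[t_j]=\sum_l\oplus T^{k_2}[t_{j,l}]$ of height-$(h_2-1)$ subtrees with coordinate projections $Q_{j,l}$, and run a dichotomy in the spirit of the $\ell_p$-saturation proof. In the \emph{concentration} case some tail $\sum_{l\ge m}\oplus T^{k_1}[s_{i_0,l}]$ has $R$-image lying within $\epsilon$ of a fixed finite set $S$ of subtrees $T^{k_2}[t_{j,l}]$, with $\epsilon$ below the lower bound of $R$; then $Q_S\circ R$ remains bounded below on that tail and maps it into a finite sum of height-$(h_2-1)$ trees, contradicting the induction hypothesis (legitimate since $k_1-|s_{i_0}|=h_1-1>h_2-1$). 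In the \emph{spreading} case the image escapes every finite collection of subtrees; a gliding-hump selection then produces one generator $u_r$ from each of infinitely many sub-summands $T^{k_1}[s_{i_0,l_r}]$, with far-out supports, so that $(u_r)$ is an $\ell_\infty$-sequence in the source by Theorem~\ref{thmSubspaceslinfty} (valid because $|s_{i_0,l}|=|s|+2<k_1$ when $h_1-1\ge 2$), while $(Ru_r)$ is, after perturbation, a block sequence in $T_{k_2}(d,\theta)$ marching out across the subtree FDD. Arranging the $X_v^{\max}$-support conditions, Theorem~\ref{thmBlockSubspaceslp} (equivalently Proposition~\ref{propLowerlpEstimate}) gives $\norm{\sum_{r\le N}Ru_r}\ge cN^{1/p}$, whereas $\norm{\sum_{r\le N}Ru_r}\le\norm{R}\,\norm{\sum_{r\le N}u_r}\le\norm{R}\cdot\tfrac{\theta(d-1)}{1-\theta}$ stays bounded, a contradiction. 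As both branches fail, no embedding exists.

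The main obstacle is the spreading branch, and specifically the simultaneous bookkeeping it demands: I must choose the generators $u_r$ so that they are at once $\ell_\infty$-structured in the source — which only pins them to distinct sub-summands with far-out supports — and carried by $R$ to a genuinely spread-out block sequence to which Theorem~\ref{thmBlockSubspaceslp} applies, which forces me to interleave the gliding-hump extraction with the verification of the $X_v^{\max}$ hypotheses in the target. A secondary point to nail down is that the concentration alternative produces an embedding of an honest tail $\sum_{l\ge m}\oplus T^{k_1}[s_{i_0,l}]$, uniformly over the whole tail rather than merely along a subsequence, so that the induction hypothesis genuinely applies.
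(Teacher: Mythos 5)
Your induction scheme, base case, and intended contradiction (an $\ell_\infty^N$ in the source forced onto an $\ell_p^N$ in the target) match the paper, but the inductive step diverges and the spreading branch has a genuine gap. A gliding-hump selection only controls the \emph{new} part of each image $Ru_r$ --- the component on subtrees beyond those already visited --- and gives no smallness of the component of $Ru_r$ on the earlier subtrees. So $(Ru_r)$ is not, after small perturbation, a block sequence marching out across the subtree decomposition, and Theorem \ref{thmBlockSubspaceslp} does not apply to it: its hypotheses require $\supp{y_r}\subset X_{v_r}^{\max}$ with $v_r$ past $y_{r-1}$, which by Lemma \ref{lemTails} forces each $y_r$ to have \emph{zero} component on every subtree met by $y_{r-1}$ and on everything $\prec v_r$. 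Passing to the diagonal parts $(P_{S_r}-P_{S_{r-1}})Ru_r$ repairs the support condition but then you must bound $\norm{\sum_r (P_{S_r}-P_{S_{r-1}})Ru_r}$ by $\sup_{\epsilon_r=\pm1}\norm{\sum_r \epsilon_r Ru_r}$, an averaging/unconditionality step you do not supply. Two further unaddressed points: the negation of your concentration alternative produces spreading \emph{tail vectors}, not normalized generators $u_r$ lying in single sub-summands $T^{k_1}[s_{i_0,l_r}]$ as Theorem \ref{thmSubspaceslinfty} requires; and for $M>1$ the full image sequence cannot satisfy the hypotheses of Theorem \ref{thmBlockSubspaceslp} at all, since by Corollary \ref{cor3LemElemXvmax} a chain of $X_{v_r}^{\max}$'s meets at most one tree $\tau^{k_2}[t_j]$, so the theorem must be applied to each $Q_j(y_r)$ separately and the estimates recombined (the paper does this with a triangle-plus-H\"older computation).

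The repair --- and the paper's actual argument --- is to observe that the induction hypothesis makes your dichotomy unnecessary: for \emph{any} finite set $S$ of one-level-down target subtrees and any tail of the relevant source summand, the induction hypothesis says $Q_S\circ R$ is not bounded below there, so at every stage you can produce a normalized $x_i$ in the prescribed summand $T^{k_1}[s_{n+i-1}]$, with support past a common $v$, such that $Q_S R x_i\approx 0$. Choosing $S$ at stage $i$ to be the initial block of subtrees determined via Lemma \ref{lemTails} by a node $v_i$ placed beyond the previous image, the images $y_i\approx Rx_i$ can be taken with exactly zero component on that initial block, which is precisely what delivers the $X_{v_i}^{\max}$ hypotheses; meanwhile the $x_i$ span $\ell_\infty^N$ uniformly by Theorem \ref{thmSubspaceslinfty}. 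In other words, the induction hypothesis is what \emph{manufactures} the spread-out images; it is not merely the fallback for the concentration alternative. Your concentration branch is fine as far as it goes, and your cotype formulation of the base case is a clean way to make the paper's implicit $\ell_\infty^N$-versus-$\ell_p$ obstruction precise, but as written the spreading branch does not close.
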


\bpf
We proceed by induction. For the base case we assume that $k_2 - |t_1| = 1 < k_1 - |s|$. By
Corollary \ref{corLastTreelp}, $T^{k_2}[t_i]$ is isomorphic to $\ell_p$, and consequently so is
$T^{k_2}[t_1] \oplus \cdots \oplus T^{k_2}[t_M]$. On the other hand, Theorem
\ref{thmSubspaceslinfty} guarantees that $T^{k_1}[s]$ has arbitrarily large copies of
$\ell_\infty^N$.

\smallbreak
Suppose now that the result is true for $m\in\Zp$ and let $k_2-|t_1|= m+1 < k_1-|s|$.
\smallbreak

We will show a simpler case first, when $M=1$.  Suppose, towards a contradiction, that there exists $n\in\Zp$ and an isomorphism
\vspace{4pt}
\beqs
\Phi:\sum_{i=n}^\infty\oplus T^{k_1}[s_i] \to T^{k_2}[t_1].
\eeqs

Decompose $T^{k_2}[t_1]=\sum_{j=1}^\infty\oplus T^{k_2}[r_j]$ according to Lemma \ref{decompose_trees}.
Find $N$ large enough and $v\in\ome{k_1}$ such that $s_n\prec s_{n+1}\prec\cdots\prec s_{n+N-1}\prec v$.
We will find normalized $x_1\in T^{k_1}[s_n], x_2\in T^{k_1}[s_{n+1}],\dots, x_N\in T^{k_1}[s_{n+N-1}]$ such that 
$v < x_i$ for $i\leq N$ and we will use Theorem \ref{thmSubspaceslinfty} to conclude that 
$\hbox{span}\{x_1,\dots, x\} \approx \ell_\infty^N$. Recall that the isomorphism constant is independent of $N$
and of the $x_i$'s.

Let $v_1$ be the first element of $\tau^{k_2}[t_1]$, and let $x_1 \in T^{k_1}[s_n]$ be such that
$\norm{x_1} = 1$  and $v < x_1$. Find a finitely supported $y_1 \in T^{k_2}[t_1]$ such that $y_1
\approx\Phi(x_1)$. Applying Lemma \ref{lemTails} we can find $v_2\in\tau^{k_2}[t_1]$
such that $v_1 \leq y_1 < v_2$ and $X_{v_2}^{\max} \cap \tau^{k_2}[t_1] = \bigcup_{j=m_1+1}^\infty
\tau^{k_2}[r_j]$ for some $m_1\in\mathbb{N}$.

Since $k_1-|s_{n+1}|>k_2-|r_1|=m$ we can apply the induction hypothesis.  In particular, the map
\vspace{3pt}
\beqs
P_{m_1} \Phi_{|T^{k_1}[s_{n+1}]}: T^{k_1}[s_{n+1}] \to T^{k_2}[r_1] \oplus \cdots \oplus T^{k_2}[r_{m_1}].
\eeqs
is not an isomorphism.  As a result, there exists $x_2\in T^{k_1}[s_{n+1}]$ such that $\|x_2\|=1$ and $P_{m_1}\Phi(x_2)\approx 0$.
To add the property $v < x_2$, we decompose $T^{k_1}[s_{n+1}]=\sum_{i=1}^\infty\oplus T^{k_1}[u_i]$ 
as in Lemma \ref{decompose_trees} and apply
the induction hypothesis to $\sum_{i=p}^\infty\oplus T^{k_1}[u_i]$ for $p$ large enough.

Now that we have a normalized $x_2\in T^{k_1}[s_{n+1}]$ that satisfies $v<x_2$ and $P_m\Phi(x_2)\approx 0$, 
we find a finitely supported $y_2\in T^{k_2}[t_1]$ such that  $y_2\approx\Phi(x_2)$ and $P_{m_1} y_2=0$. Notice that $v_1 \leq y_1 < v_2 < y_2$
and that Lemma \ref{tree_max_approx} gives that $\supp{y_2}\subset X_{v_2}^{\max}$.

We now repeat the argument. Use Lemma \ref{lemTails} to find $v_3\in\tau^{k_2}[t_1]$
such that $y_2 < v_3$ and $X_{v_3}^{\max} \cap \tau^{k_2}[t_1]=\bigcup_{j=m_2+1}^\infty\tau^{k_2}[r_j].$
Then we find a normalized $x_3\in T^{k_1}[s_{n+2}]$ such that $v < x_3$ and $P_{m_2} \Phi(x_3)$ is
essentially zero. Finally, we find a finitely supported $y_3\in T^{k_2}[t_1]$ such that $y_3\approx\Phi(x_3)$ and $P_{m_2}
y_3=0$.

Proceeding this way, for every $i\leq N$, we find normalized $x_i\in T^{k_1}[s_{n+i-1}]$ with $v < x_i$,
$v_i \in \ome{k_2}$, and $y_i \in T^{k_2}[t_1]$ such that $y_i\approx\Phi(x_i)$ and
\vspace{3pt}
\beqs
v_1 \leq y_1 < v_2 < y_2 < \cdots < v_N < y_N \ {\rm and}\ v_{i+1}, \supp{y_i} \subset X_{v_i}^{\max}.
\eeqs

By Theorem \ref{thmBlockSubspaceslp},  $(y_i)_{i=1}^N$ is isomorphic to the canonical basis of $\ell_p^N$.  Hence, 
$\Phi$ maps $\ell_\infty^N$ isomorphically into $\ell_p^N$.  Since $N$
is arbitrary, this contradicts that $\Phi$ is continuous (see equation \ref{holder} below) and we conclude the case $M=1$.  

\medbreak

Let $M> 1$ and suppose, towards a contradiction, that there exists $n\in\Zp$ and an isomorphism
\vspace{4pt}
\beqs
\Phi:\sum_{i=n}^\infty\oplus T^{k_1}[s_i] \to T^{k_2}[t_1]\oplus T^{k_2}[t_2]\oplus\cdots\oplus T^{k_2}[t_M].
\eeqs

For each $j\in\Zp$ let $Q_j:\sum_{i=1}^M T^{k_2}[t_i] \to T^{k_2}[t_j]$ be the canonical projection. 
Decompose $T^{k_2}[t_j]=\sum_{i=1}^\infty T^{k_2}[r_i^j]$ as in Lemma \ref{decompose_trees} and for
each $m\in\Zp$, let $P_m^j:T^{k_2}[t_j]\to \sum_{i=1}^m T^{k_2}[r_i^j]$ be the canonical projection
onto the first $m$ blocks.  

The proof is similar to the case $M=1$.   
Find $N$ large enough and $v\in\ome{k_1}$ such that $s_n\prec s_{n+1}\prec\cdots\prec s_{n+N-1}\prec v$.
Find $x_1 \in T^{k_1}[s_n]$ such that
$\norm{x_1} = 1$  and $v < x_1$ and find a finitely supported $y_1 \in T^{k_2}[t_1]\oplus\cdots\oplus T^{k_2}[t_M]$ such that $y_1
\approx\Phi(x_1)$. 

For each $j\leq M$, let $v_1^j=\minprec(\tau^{k_2}[t_j])$.  Use Lemma \ref{lemTails} to find $v_2^j\in\tau^{k_2}[t_j]$ such that $Q_j(y_1)<v_2^j$
and $X_{v_2}^{\max}\cap\tau^{k_2}[t_j]=\bigcup_{i=m_1^j+1}^\infty \tau^{k_2}[r_i^j]$ for some $m_1^j\in\mathbb{N}$.
Let $P_1=\sum_{j=1}^M P_{m_1^j}^j$ be the projection onto the first blocks of each of the $T^{k_2}[t_j]$'s.  

Since $k_1-|s_{n+1}|>k_2-|r_1|=m$ we can apply the induction hypothesis.  In particular, the map
$P_1 \Phi_{|T^{k_1}[s_{n+1}]}$  is not an isomorphism and we can find $x_2\in T^{k_2}[s_{n+1}]$ such that
$\|x_2\|=1$ and $P_1\Phi(x_2)\approx 0$.  Arguing as in the case $M=1$, we can also assume that $v<x_2$.
We then find a finitely supported $y_2\in \sum_{j=1}^M\oplus T^{k_2}[t_j]$ such that $y_2\approx\Phi(x_2)$ and  $P_1y_2=0$.

Proceeding this way, for every $i\leq N$, we find normalized $x_i\in T^{k_1}[s_{n+i-1}]$ with $v < x_i$
and $y_i \in \sum_{j=1}^M\oplus T^{k_2}[t_j]$ such that $y_i\approx\Phi(x_i)$.  Moreover,
for every $j\leq M$, we can find $v_i^j \in \ome{k_2}$ such that
\vspace{3pt}
\beqs
v_1^j \leq Q_j(y_1) < v_2^j < Q_j(y_2) < \cdots < v_N^j < Q_j(y_N) \ {\rm and}\ v_{i+1}^j,\supp{Q_j(y_i)} \subset X_{v_i^j}^{\max}.
\eeqs

By Theorem \ref{thmBlockSubspaceslp}, there exists $C_1>0$ independent of $N$ such that for every $j\leq M$, 
$$\frac{1}{C_1} \left( \sum_{i=1}^N\|Q_j(y_i)\|^p\right)^{\frac{1}{p}}\leq 
\left\| \sum_{i=1}^N Q_j(y_i)\right \|
\leq C_1 \left( \sum_{i=1}^N\|Q_j(y_i)\|^p\right)^{\frac{1}{p}}.$$
Using the triangle inequality for $y_i=\sum_{j=1}^MQ_j(y_i)$, Holder's inequality $\biggl( \sum_{i=1}^N|a_i|
\leq N^{1/q}\biggl(\sum_{i=1}^N|a_i|^p\biggr)^{1/p}$, $\frac{1}{p}+\frac{1}{q}=1\biggr)$, 
Theorem \ref{thmBlockSubspaceslp}, and the fact that the projections $Q_j$ are contractive, we get 
\begin{align}\label{holder}
\sum_{i=1}^N\|y_i\| & \leq \sum_{i=1}^N\sum_{j=1}^M\|Q_j(y_i)\| \leq N^{1/q}\sum_{j=1}^M \left(\sum_{i=1}^N\|Q_j(y_i)\|^p\right)^{1/p} \cr
   	&\leq C_1 N^{1/q}\sum_{j=1}^M\left\|\sum_{i=1}^N Q_j(y_i)\right\|\leq C_1 N^{1/q}\sum_{j=1}^M \left\|\sum_{i=1}^N y_i\right\|   \cr
	& = C_1 N^{1/q} M \left\|\sum_{i=1}^N y_i\right\|  \approx C_1 N^{1/q} M \left\|\Phi\left( \sum_{i=1}^N x_i\right) \right\| \cr
	& \leq C_1 N^{1/q} M\|\Phi\|\left\|\sum_{i=1}^N x_i\right\|.
\end{align}

Since $N$ is arbitrary, $\sum_{i=1}^N\|y_i\|$ is of order $N$, and $\|\sum_{i=1}^N x_i\|$ stays bounded, we see that $\Phi$ cannot be bounded, contradicting
our assumption.  
\epf

\section{$T_k(d,\theta)$ embeds isometrically into $T_{k+1}(d,\theta)$}\label{sec.embedding}

For fixed $d$ and $\theta$,
the spaces $T_k(d,\theta)$, $k\ge 1$,  form a natural hierarchy in complexity over $\ell_p$.
In this section we prove that when
$j<k$, the Banach space $T_j(d,\theta)$ embeds  isomorphically into $T_k(d,\theta)$.
The basis for these results is the special  feature that 
the $j$-dimensional Ellentuck space $\mathcal{E}_j$ embeds into the $k$-dimensional Ellentuck space $\mathcal{E}_k$ in many different ways.
First, $(\omle{j},\prec)$ embeds into $(\omle{k},\prec)$ 
as  a trace above any given fixed stem of length $k-j$  in $\omle{k}$.
Second,   $(\omle{j},\prec)$  also embeds into $(\omle{k},\prec)$   as the projection of  each member in $\omle{k}$ to its first $j$ coordinates.
There are many other ways to embed 
$(\omle{j},\prec)$ into  $(\omle{k},\prec)$, and each of these embeddings will induce an embedding of $T_j(d,\theta)$ into $T_k(d,\theta)$, as
these embeddings preserve both the tree structure and  the $\prec$ order.
This  is implicit in the constructions of the  spaces $\mathcal{E}_k$  in \cite{DobrinenJSL15}
and  explicit in the recursive construction of the finite and   infinite dimensional Ellentuck spaces in  \cite{DobrinenIDE15}.

The following notation will be useful.
 Let $\Phi:\ome{k}\to\ome{k+1}$ be defined by $\Phi(v)=(0)^{\frown}v$.  One can easily check that 
$\Phi$ preserves $\prec$ and $\sqsubset$.  We can naturally extend the definition of $\Phi$ to the
finitely supported vectors of $T_k(d,\theta)$ by 
$\Phi\left(  \sum_i a_i e_{v_i} \right) =\sum_i a_i e_{\Phi(v_i)}.$

\begin{lem}\label{supperApprox}
Let $E\in\mathcal{AR}^k$ and suppose that $v=\min_{\prec}E$.  Then there exists $F\in\mathcal{AR}^{k+1}$ such that 
$\phi(E)\subset F$, $\min_{\prec}(\Phi(E))=\min_{\prec}(F)$ and
$\max_{\prec}(\Phi(E))=\max_{\prec}(F)$.
\end{lem}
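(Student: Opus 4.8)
The plan is to realize $\Phi(E)$ inside the special maximal element $X_{\Phi(v)}^{\max}\in\mathcal{E}_{k+1}$ and then truncate that member at the correct $\prec$-rank to produce $F$. The key observation is that although $\Phi(E)$ itself need not belong to $\mathcal{AR}^{k+1}$, it is contained in $X_{\Phi(v)}^{\max}$, which \emph{is} a member of $\mathcal{E}_{k+1}$, and every initial $\prec$-segment of a member of $\mathcal{E}_{k+1}$ lies in $\mathcal{AR}^{k+1}$ by definition of $r_m$.

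First I would record the containment $\Phi(X_v^{\max})\subseteq X_{\Phi(v)}^{\max}$. Since $v=\min_{\prec}E$, Proposition \ref{propXvmax} gives $E\subseteq X_v^{\max}$, so it suffices to show that $\Phi$ carries $X_v^{\max}$ into $X_{\Phi(v)}^{\max}$. Writing $v=(n_1,\dots,n_k)$, so that $\Phi(v)=(0,n_1,\dots,n_k)$ still has last coordinate $n_k$, this is a direct comparison of the two membership criteria supplied by Lemma \ref{lemElemXvmax}. If $u=(p_1,\dots,p_k)\in X_v^{\max}$ because $p_1>n_k$, then $\Phi(u)=(0,p_1,\dots,p_k)$ meets the criterion for $X_{\Phi(v)}^{\max}$ with witness $i=1$, since its first coordinate agrees with that of $\Phi(v)$ and its second coordinate $p_1$ exceeds $n_k$; and if $u\in X_v^{\max}$ because $(p_1,\dots,p_i)=(n_1,\dots,n_i)$ with $p_{i+1}>n_k$ when $i<k$, then $\Phi(u)$ meets the criterion with witness $i+1$. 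I expect this case check to be the main (though routine) computation of the proof.

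With $\Phi(E)\subseteq X_{\Phi(v)}^{\max}$ in hand, the rest is packaging. Because $\Phi$ preserves $\prec$, we have $\min_{\prec}\Phi(E)=\Phi(v)$ and $\max_{\prec}\Phi(E)=\Phi(w_0)$, where $w_0:=\max_{\prec}E$. Now $\Phi(v)$ is exactly the $\prec$-least element of $X_{\Phi(v)}^{\max}$, since it equals $\widehat{X}_{\Phi(v)}(0,\dots,0)$, and $\Phi(w_0)\in X_{\Phi(v)}^{\max}$; as $\prec$ well-orders $X_{\Phi(v)}^{\max}$ in type $\om$, let $m$ be the $\prec$-rank of $\Phi(w_0)$ there and set $F:=r_m\bigl(X_{\Phi(v)}^{\max}\bigr)\in\mathcal{AR}^{k+1}$. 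Then $\min_{\prec}F=\Phi(v)=\min_{\prec}\Phi(E)$ and, by the choice of $m$, $\max_{\prec}F=\Phi(w_0)=\max_{\prec}\Phi(E)$. Finally, every $u\in E$ yields $\Phi(u)\in X_{\Phi(v)}^{\max}$ with $\Phi(u)\preceq\Phi(w_0)$, so each such $\Phi(u)$ sits among the $\prec$-least $m$ members of $X_{\Phi(v)}^{\max}$; hence $\Phi(E)\subseteq F$, which completes the argument.
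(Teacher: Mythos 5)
Your proof is correct and takes essentially the same route as the paper's: both use Proposition \ref{propXvmax} together with the membership characterization of Lemma \ref{lemElemXvmax} to show that $\Phi$ carries $E$ into $X_{\Phi(v)}^{\max}$, and then define $F$ as the initial $\prec$-segment of $X_{\Phi(v)}^{\max}$ ending at $\max_{\prec}\Phi(E)$. The paper leaves the case check on the membership criterion implicit, whereas you spell it out; otherwise the arguments coincide.
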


\begin{proof}
Let $v=\min_{\prec}(E)$.  Proposition \ref{propXvmax} says that every $w\in E$ belongs to $X_v^{\max}$.  
Since $\Phi$ adds a $0$ at the beginning of each sequence, the characterization of Lemma \ref{lemElemXvmax} implies that
for every $w\in E$, $\Phi(w)$ belongs to $X_{\Phi(v)}^{\max}$.  Then the initial segment of $X_{\Phi(v)}^{\max}$ defined by
$F=\{s\in X_{\Phi(v)}^{\max}:s\preceq \max_{\prec}\Phi(E) \}$ satisfies all the conditions of the Lemma.
\end{proof}

\begin{cor}
Let $x\in T_k(d,\theta)$ be finitely supported.  Then $\|\Phi(x)\|_{T_{k+1}(d,\theta)}\ge \| x \|_{T_{k}(d,\theta)}$.
\end{cor}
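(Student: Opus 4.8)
The plan is to prove the stronger pointwise statement that $\abs{\Phi(x)}_j \ge \abs{x}_j$ for every finitely supported $x\in T_k(d,\theta)$ and every $j\in\mbN$, where $\abs{\cdot}_j$ denotes the $j$-th seminorm used to define the norm on each space. Since $\norm{\Phi(x)}_{T_{k+1}(d,\theta)}=\sup_{j}\abs{\Phi(x)}_j$ and $\norm{x}_{T_k(d,\theta)}=\sup_{j}\abs{x}_j$, taking the supremum over $j$ immediately yields the corollary. I would argue by induction on $j$. The base case $j=0$ is an equality: because $\Phi$ is injective and sends the coefficient $x_v$ at $v$ to the coefficient of $\Phi(x)$ at $\Phi(v)$, we have $\abs{\Phi(x)}_0=\max_v\abs{x_v}=\abs{x}_0$.

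For the inductive step, assume $\abs{\Phi(y)}_j\ge\abs{y}_j$ for all finitely supported $y$. If $\abs{x}_{j+1}=\abs{x}_j$, then using that the seminorms are non-decreasing in $j$ together with the inductive hypothesis gives $\abs{\Phi(x)}_{j+1}\ge\abs{\Phi(x)}_j\ge\abs{x}_j=\abs{x}_{j+1}$. Otherwise there exist $m\le d$ and a $d$-admissible sequence $(E_i)_{i=1}^m\subset\mcAR^k$ with $\abs{x}_{j+1}=\theta\sum_{i=1}^m\abs{E_i x}_j$. For each $i$ I would apply Lemma \ref{supperApprox} to $E_i$, obtaining $F_i\in\mcAR^{k+1}$ with $\Phi(E_i)\subseteq F_i$, $\minprec(F_i)=\minprec(\Phi(E_i))$, and $\maxprec(F_i)=\maxprec(\Phi(E_i))$. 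The key observation is that $(F_i)_{i=1}^m$ is again $d$-admissible in $\mcAR^{k+1}$: since $\Phi$ preserves $\prec$, the admissibility $E_1<\cdots<E_m$ transfers to $\Phi(E_1)<\cdots<\Phi(E_m)$, and the coincidence of minima and maxima supplied by Lemma \ref{supperApprox} then forces $F_1<\cdots<F_m$, while the count $m\le d$ is unchanged. Consequently, by the definition of $\abs{\cdot}_{j+1}$ on $T_{k+1}(d,\theta)$,
\[
\abs{\Phi(x)}_{j+1}\ge\theta\sum_{i=1}^m\abs{F_i\Phi(x)}_j.
\]

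It remains to compare $F_i\Phi(x)$ with $\Phi(E_i x)=\sum_{v\in E_i}x_v e_{\Phi(v)}$. Since $\Phi(x)$ is supported on $\Phi(\ome{k})$ and $\Phi(E_i)\subseteq F_i$, the support of $\Phi(E_i x)$ is contained in that of $F_i\Phi(x)$ and the two vectors agree there; $F_i$ may well pick up further coordinates of $\supp\Phi(x)$ lying outside $\Phi(E_i)$, but this is harmless. Indeed, restricting the support of a vector does not increase any $\abs{\cdot}_j$ — the level-by-level form of the $1$-unconditionality of the basis, which is established by a parallel induction on $j$ directly from the recursive definition of the seminorms. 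This gives $\abs{F_i\Phi(x)}_j\ge\abs{\Phi(E_i x)}_j$, and then applying the inductive hypothesis to $y=E_i x$ yields $\abs{\Phi(E_i x)}_j\ge\abs{E_i x}_j$. Chaining these estimates,
\[
\abs{\Phi(x)}_{j+1}\ge\theta\sum_{i=1}^m\abs{F_i\Phi(x)}_j\ge\theta\sum_{i=1}^m\abs{E_i x}_j=\abs{x}_{j+1},
\]
which closes the induction and proves the corollary.

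I expect the only delicate point to be the verification that $(F_i)_{i=1}^m$ is a genuine admissible sequence in $\mcAR^{k+1}$ — that the $F_i$ are successive and actually lie in $\mcAR^{k+1}$ rather than merely being the images $\Phi(E_i)$, which need not be approximations to members of $\mathcal{E}_{k+1}$. This is precisely what Lemma \ref{supperApprox} is designed to provide, so the argument reduces to combining that lemma with the order-preservation of $\Phi$; avoiding the temptation to demand the exact identity $F_i\Phi(x)=\Phi(E_i x)$, and instead routing the inequality through $1$-unconditionality, is what keeps the proof clean.
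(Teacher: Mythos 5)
Your proof is correct and follows essentially the same route as the paper's: both hinge on Lemma \ref{supperApprox} to lift a norming admissible sequence $(E_i)_{i=1}^m$ in $\mcAR^k$ to an admissible sequence $(F_i)_{i=1}^m$ in $\mcAR^{k+1}$ with $\Phi(E_i)\subseteq F_i$, and then use $1$-unconditionality to bound $\|\Phi(E_i x)\|$ by $\|F_i\Phi(x)\|$. The only difference is bookkeeping — you induct on the seminorm index $j$ working with $\abs{\cdot}_j$, whereas the paper inducts on $\abs{\supp{x}}$ using the completed norm — and both versions of the induction close.
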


\begin{proof}
We use induction over the length of the support of $x$.  
If $|\supp{x}|=1$ the two norms are equal.  Then we assume that the result is true for 
all vectors of $T_{k}(d,\theta)$ that have fewer than $n$ elements in their support and we  
take $x\in T_{k}(d,\theta)$ with $|\supp{x}|=n$.

If $\|x\|_{T_{k}(d,\theta)}=\|x\|_{c_0}$, the result is obviously true.  If $\|x\|_{T_{k}(d,\theta)} > \|x\|_{c_0}$, there
exist $E_1,\dots,E_d\in \mathcal{AR}^k$ such that $E_1\preceq E_2\preceq\cdots\preceq E_d$ and
$\|x\|=\theta\sum_{i=1}^d\|E_ix\|_{T_{k}(d,\theta)}.$   Notice that this implies that $|\supp{E_ix}|<n$ for
every $i\leq d$.  

By Lemma \ref{supperApprox}, there are $F_1,\dots,F_d\in\mathcal{AR}^{k+1}$ such that 
$F_1\preceq F_2\preceq\cdots\preceq F_d$ and $\Phi(E_i)\subset F_i$ for $i\leq d$.  Since
$$\|E_ix\|_{T_{k}(d,\theta)}\leq \|\Phi(E_ix)\|_{T_{k+1}(d,\theta)} = \|\Phi(E_i)\Phi(x)\|_{T_{k+1}(d,\theta)}
\leq \|F_i\Phi(x)\|_{T_{k+1}(d,\theta)}$$
we conclude that 
$$\|x\|_{T_{k}(d,\theta)}=\theta\sum_{i=1}^d\|E_ix\|_{T_{k}(d,\theta)}\leq \theta\sum_{i=1}^d\|F_i\Phi(x)\|_{T_{k+1}(d,\theta)}
\leq \|\Phi(x)\|_{T_{k+1}(d,\theta)}.$$ 
\end{proof}

To prove the reverse inequality, the following notation will be useful.
For $n\in\omega$, let  
$tr_n:\omle{k+1}\to\omle{k}$ be defined by $tr_n(n_1,n_2,\dots,n_i)=(n_2,\dots,n_i)$ if $n_1=n$ and 
$tr_n(n_1,n_2,\dots,n_i)=\emptyset$ if $n_1\not=n$.  If $E\subset\ome{k+1}$, $tr_n(E)=\{tr_n(v):v\in E, tr_n(v)\not=\emptyset\}$.
Notice that $tr_n(E)=\emptyset$ if for every $v\in E$, $tr_n(v)=\emptyset$.

\begin{lem}\label{lemReverse}
Let $x=\sum_{i=1}^N a_i e_{v_i} \in T_{k+1}(d,\theta)$.    
Given $F_1\preceq \cdots\preceq F_m$, $m\leq d$  an admissible sequence for $T_{k+1}(d,\theta)$,
there is an admissible sequence $E_1\preceq \cdots\preceq E_m$  for $T_k(d,\theta)$ such that 
each $E_i=\tr_0(F_i)$ and $\Phi(E_i x)=F_i y$.
\end{lem}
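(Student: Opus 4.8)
The plan is to take $E_i:=\tr_0(F_i)$ exactly as prescribed and to read the statement with $x$ a finitely supported vector of $T_k(d,\theta)$ and $y:=\Phi(x)\in T_{k+1}(d,\theta)$ its image, so that both $E_i x$ and $F_i y$ are well typed and the asserted identity $\Phi(E_i x)=F_i y$ makes sense. Two facts must be established: that $(E_i)_{i=1}^m$ is a genuine admissible sequence for $T_k(d,\theta)$, and that the coefficient bookkeeping yields $\Phi(E_i x)=F_i y$. The point of the lemma is that, once both hold, the reverse inequality $\norm{\Phi(x)}_{T_{k+1}(d,\theta)}\le\norm{x}_{T_k(d,\theta)}$ follows by the same induction on $\abs{\supp x}$ used in the preceding corollary: if $\norm{y}$ is attained by an admissible $(F_i)$ with $\norm{y}=\theta\sum_i\norm{F_iy}$, then $\norm{F_iy}=\norm{\Phi(E_ix)}\le\norm{E_ix}$ by the inductive hypothesis (each $E_ix$ has strictly smaller support) and $\theta\sum_i\norm{E_ix}\le\norm{x}$ by the definition of the norm.

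The successiveness and cardinality requirements are the easy part and I would dispose of them first. Since $\Phi$ preserves $\prec$, it is an order embedding into a linear order, hence reflects $\prec$, so its inverse $\tr_0$ preserves $\prec$ on the sequences beginning with $0$. Because $F_1<F_2<\cdots<F_m$ forces every $0$-beginning element of $F_i$ to be $\prec$ every $0$-beginning element of $F_{i+1}$, applying $\tr_0$ gives $\maxprec(E_i)\prec\minprec(E_{i+1})$, that is $E_1<E_2<\cdots<E_m$, and $m\le d$ is inherited verbatim. (If some $F_i$ contains no element beginning with $0$, then $E_i=\emptyset$ and $F_iy=0=\Phi(E_ix)$, so that index may simply be discarded.)

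The main obstacle is verifying that each nonempty $E_i$ actually lies in $\mcAR^k$, i.e. that the $\tr_0$-image of a finite approximation to a member of $\mcE_{k+1}$ is itself a finite approximation to a member of $\mcE_k$. Here I would invoke the ``trace above the stem $(0)$'' idea anticipated at the start of this section. Write $F_i=r_{n_i}(X^{(i)})$ with $X^{(i)}\in\mcE_{k+1}$; since $F_i$ has an element beginning with $0$, necessarily $\widehat{X^{(i)}}((0))=(0)$, so that $\widehat{Y}(s):=\tr_0\bigl(\widehat{X^{(i)}}((0)^{\frown}s)\bigr)$ is well defined on $\omle{k}$. One checks that $\widehat{Y}$ preserves both $\sqsubset$ and $\prec$, since prepending a $0$, applying $\widehat{X^{(i)}}$, and then stripping the common leading $0$ each preserve both relations; hence $\widehat{Y}$ is an $\mcE_k$-tree and its restriction $Y$ to $\ome{k}$ belongs to $\mcE_k$. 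Because $r_{n_i}(X^{(i)})$ is $\prec$-downward closed in $X^{(i)}$, its $0$-beginning part is a $\prec$-initial segment of the $0$-beginning elements of $X^{(i)}$, and $\tr_0$ carries this isomorphically onto a $\prec$-initial segment of $Y$; therefore $E_i=\tr_0(F_i)=r_{n_i'}(Y)\in\mcAR^k$, where $n_i'=\abs{E_i}$.

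Finally, the identity $\Phi(E_ix)=F_iy$ is a direct coefficient computation that I expect to be routine. Writing $x=\sum_w x_w e_w$ over $w\in\ome{k}$, the image $y=\Phi(x)$ vanishes off the $0$-beginning sequences, so $F_iy=\sum_{v\in F_i,\ v \text{ begins with }0} x_{\tr_0(v)}\,e_v$. On the other hand $\Phi(E_ix)=\sum_{w\in E_i} x_w\,e_{(0)^{\frown}w}$, and as $w$ runs over $E_i=\tr_0(F_i)$ the vector $(0)^{\frown}w$ runs over exactly the $0$-beginning elements $v$ of $F_i$, with matching coefficient $x_w=x_{\tr_0(v)}$. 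The two sums coincide, which completes the argument.
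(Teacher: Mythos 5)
Your proposal is correct and follows essentially the same route as the paper: you pass to the $\mcE_k$-tree $\widehat{Y}(s)=\tr_0\bigl(\widehat{X}((0)^{\frown}s)\bigr)$ induced by a tree generating each $F_i$, conclude $E_i=\tr_0(F_i)\in\mcAR^k$, verify successiveness from the fact that $\tr_0$ preserves $\prec$ on sequences beginning with $0$, and finish with the same coefficient computation giving $\Phi(E_ix)=F_i\Phi(x)$. The only cosmetic difference is that you spell out slightly more explicitly why the $0$-beginning part of an initial segment of $X$ maps onto an initial segment of $Y$, which the paper leaves terse.
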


\begin{proof}
We will prove that if $F\in \mathcal{AR}^{k+1}$ and $F\Phi(x)\not=0$, then $E=tr_0(F)\in\mathcal{AR}^k$ and $\Phi(Ex)=F\Phi(x)$.
The rest of the lemma follows easily from this.

Let $F\in \mathcal{AR}^{k+1}$ with $F\Phi(x)\not=0$.  Then there exists an $\mathcal{E}_{k+1}$-tree $\widehat{X}$ such that $F$
is an initial segment of $X$.  Since $F\Phi(x)\not=0$, the first element of $F$ is of the form $(0,n_2,\dots,n_{k+1})$ for some $n_2\leq\cdots\leq n_{k+1}$.
Define $\widehat{Y}:\omle{k}\to\omle{k}$ by $\widehat{Y}(v)=tr_0\left(\widehat{X}\bigl((0)^{\frown}v\bigr)\right)$.
Since $\widehat{X}$ preserves $\prec$ and $\sqsubset$, it follows that $\widehat{Y}$ preserves those orders as well and hence $\widehat{Y}$ is an $\mathcal{E}_k$-tree.
Since $\widehat{Y}$ preserves $\prec$, it follows that $E=tr_0(F)$ is an initial segment of $Y$ (i.e., $E\in\mathcal{AR}^k$).
Since $(0)^{\frown}v\in F$ iff $v\in E$, we have
$$F\Phi(x)=\sum_{(0)^{\frown}v_i\in F} a_ie_{\Phi(v_i)}=\sum_{v_i\in E}a_ie_{\Phi(v_i)}=\Phi\left( \sum_{v_i\in E} a_i e_i\right) = \Phi(Ex) .$$

To complete the proof of the Lemma, suppose that $F_1\preceq \cdots\preceq F_m$, $m\leq d$ is an admissible sequence with $E_i=tr_0(F_i)\not=\emptyset$ for $i\leq m$.
Let $i<j$, $v\in E_i$ and $w\in E_j$.  Then $(0)^{\frown}v\in F_i$ and $(0)^{\frown}w\in F_j$, which implies that $(0)^{\frown}v\prec (0)^{\frown}w$.  And from here we conclude that $v\prec w$.  Since the elements are arbitrary we have that  $E_1\preceq \cdots\preceq E_m$.
\end{proof}

\begin{cor}
Let $x\in T_k(d,\theta)$ be finitely supported.  Then $\|\Phi(x)\|_{T_{k+1}(d,\theta)}\le \| x \|_{T_{k}(d,\theta)}$.
\end{cor}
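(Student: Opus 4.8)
The plan is to prove this reverse inequality by induction on $|\supp{x}|$, running the argument of the previous corollary but now \emph{pulling admissible sequences back} through $\Phi$ by means of Lemma \ref{lemReverse}, rather than pushing them forward through Lemma \ref{supperApprox}. When $|\supp{x}|=1$ both norms reduce to the absolute value of the single coefficient, so the base case is immediate. Note also that $\Phi$ merely relabels basis vectors injectively while preserving coefficients, so $\|\Phi(x)\|_\infty=\|x\|_\infty$ and $|\supp{\Phi(x)}|=|\supp{x}|$ throughout.

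For the inductive step I would fix $x$ with $|\supp{x}|=n$, assume the inequality for all vectors with support of size $<n$, and split according to the norm formula for $T_{k+1}(d,\theta)$. If $\|\Phi(x)\|_{T_{k+1}(d,\theta)}=\|\Phi(x)\|_\infty$, then $\|\Phi(x)\|_\infty=\|x\|_\infty\le\|x\|_{T_k(d,\theta)}$ and we are done. Otherwise there is an admissible sequence $F_1\preceq\cdots\preceq F_m$ in $\mathcal{AR}^{k+1}$, with $m\le d$ and $\|\Phi(x)\|_{T_{k+1}(d,\theta)}=\theta\sum_{i=1}^m\|F_i\Phi(x)\|_{T_{k+1}(d,\theta)}$; discarding the blocks with $F_i\Phi(x)=0$, I may assume every $F_i\Phi(x)\neq 0$. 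Lemma \ref{lemReverse} then furnishes an admissible sequence $E_1\preceq\cdots\preceq E_m$ in $\mathcal{AR}^k$, with $E_i=tr_0(F_i)$, satisfying $\Phi(E_i x)=F_i\Phi(x)$ for each $i$. Applying the inductive hypothesis to each $E_i x$ gives $\|F_i\Phi(x)\|_{T_{k+1}(d,\theta)}=\|\Phi(E_i x)\|_{T_{k+1}(d,\theta)}\le\|E_i x\|_{T_k(d,\theta)}$, and since $(E_i)_{i=1}^m$ is admissible with $m\le d$, the definition of the norm on $T_k(d,\theta)$ yields
\[
\|\Phi(x)\|_{T_{k+1}(d,\theta)}=\theta\sum_{i=1}^m\|F_i\Phi(x)\|_{T_{k+1}(d,\theta)}\le\theta\sum_{i=1}^m\|E_i x\|_{T_k(d,\theta)}\le\|x\|_{T_k(d,\theta)}.
\]

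The step I expect to require the most care is the justification that the inductive hypothesis actually applies to each $E_i x$, i.e. that $|\supp{E_i x}|<n$. Since $\Phi$ is injective we have $|\supp{E_i x}|=|\supp{F_i\Phi(x)}|$, and the $F_i$ have pairwise disjoint (successive) supports, so certainly each $|\supp{F_i\Phi(x)}|\le n$. If some surviving block captured the whole support, say $F_i\Phi(x)=\Phi(x)$ with all other blocks zero, then the displayed identity would read $\|\Phi(x)\|=\theta\|\Phi(x)\|$ with $0<\theta<1$, forcing $\Phi(x)=0$; this contradicts our being in the case $\|\Phi(x)\|_{T_{k+1}(d,\theta)}>\|\Phi(x)\|_\infty$. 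Hence every surviving block satisfies $|\supp{F_i\Phi(x)}|<n$, the induction is legitimate, and the estimate goes through.

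Combining this inequality with the previous corollary gives $\|\Phi(x)\|_{T_{k+1}(d,\theta)}=\|x\|_{T_k(d,\theta)}$ for all finitely supported $x$, and therefore, by density of the finitely supported vectors, $\Phi$ extends to an isometric embedding of $T_k(d,\theta)$ into $T_{k+1}(d,\theta)$.
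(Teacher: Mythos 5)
Your proof is correct and follows essentially the same route as the paper: induction on $|\supp{x}|$, splitting on whether the norm of $\Phi(x)$ is attained by the sup-norm or by an admissible sequence, and pulling that sequence back to $\mathcal{AR}^k$ via Lemma \ref{lemReverse} before invoking the inductive hypothesis. Your explicit justification that each surviving block satisfies $|\supp{F_i\Phi(x)}|<n$ is a welcome detail the paper merely asserts.
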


\begin{proof}
We use induction over the length of the support of $x$.  
If $|\supp{x}|=1$ the two norms are equal.  Then we assume that the result is true for 
all vectors of $T_{k}(d,\theta)$ that have fewer than $n$ elements in their support and we  
take $x\in T_{k}(d,\theta)$ with $|\supp{x}|=n$.

If $\|\Phi(x)\|_{T_{k+1}(d,\theta)}=\|\Phi(x)\|_{c_0}$, the result is obviously true.  If $\|\Phi(x)\|_{T_{k+1}(d,\theta)} > \|\Phi(x)\|_{c_0}$, there
exist $F_1,\dots,F_m\in \mathcal{AR}^{k+1}$, $m\leq d$, such that $F_1\preceq\cdots\preceq F_m$ and
$\|\Phi(x)\|_{T_{k+1}(d,\theta)}=\theta\sum_{i=1}^m\|F_i \Phi(x)\|_{T_{k+1}(d,\theta)}.$   Notice that this implies that for $i\leq m$, $|\supp{F_i \Phi(x)}|<n$.  
Moreover, we can assume that $F_i\Phi(x)\not=0$.  

By Lemma \ref{lemReverse}, there are $E_1,\dots,E_m\in\mathcal{AR}^{k}$ such that 
$E_1\preceq\cdots\preceq E_m$ and $F_i\Phi(E_i)=\Phi(E_ix)$.  By the induction hypothesis, $\|\Phi(E_ix)\|_{T_{k+1}(d,\theta)  }\leq \|E_ix\|_{T_{k}(d,\theta)}$. Then

$$\|\Phi(x)\|_{T_{k+1}(d,\theta)}=\theta\sum_{i=1}^m\|F_i\Phi(x)\|_{T_{k+1}(d,\theta)}\leq \theta\sum_{i=1}^m\|E_i x\|_{T_{k}(d,\theta)}
\leq \| x \|_{T_{k}(d,\theta)}.$$ 
\end{proof}

Combining the previous corollaries, we obtain the following result:

\begin{thm}\label{thm.isoembd}
$\Phi:T_k(d,\theta)\to T_{k+1}(d,\theta)$ is an isometric isomorphism.
\end{thm}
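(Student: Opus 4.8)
The plan is to observe that the theorem is an immediate consequence of the two corollaries proved just above, together with a routine density argument; all of the genuine analytic work has already been done in establishing those two norm inequalities. First I would combine them: the first corollary gives $\norm{\Phi(x)}_{T_{k+1}(d,\theta)}\ge\norm{x}_{T_k(d,\theta)}$ and the second gives the reverse, so that for every finitely supported $x\in\czz(\ome{k})$ we have the exact equality $\norm{\Phi(x)}_{T_{k+1}(d,\theta)}=\norm{x}_{T_k(d,\theta)}$. Since $\Phi$ was defined to act linearly on finitely supported vectors by $\Phi(\sum_i a_i e_{v_i})=\sum_i a_i e_{\Phi(v_i)}$, this says precisely that $\Phi$ is a linear isometry on the dense subspace $\czz(\ome{k})$ of $T_k(d,\theta)$.

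The second step is to extend $\Phi$ from $\czz(\ome{k})$ to all of $T_k(d,\theta)$. By Definition \ref{defTk}, $T_k(d,\theta)$ is the completion of $\czz(\ome{k})$, so $\czz(\ome{k})$ is dense. A linear isometry is in particular $1$-Lipschitz, hence uniformly continuous, and therefore extends uniquely to a continuous linear map on the completion; both linearity and the identity $\norm{\Phi(x)}=\norm{x}$ pass to the extension by continuity. Thus $\Phi$ becomes a linear isometry defined on all of $T_k(d,\theta)$.

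Finally I would pin down what \emph{isometric isomorphism} means here. Since Theorem \ref{isomorphismThm1} shows $T_k(d,\theta)$ and $T_{k+1}(d,\theta)$ are not isomorphic, $\Phi$ cannot be surjective onto the full target; the intended claim (and the one matching the section title) is that $\Phi$ is an isometric isomorphism \emph{onto its image}. I would identify that image explicitly: $\Phi(v)=(0)^{\frown}v$ carries $\ome{k}$ bijectively onto $\tau^{k+1}[(0)]$, namely the length-$(k+1)$ sequences with first coordinate $0$, so $\Phi$ maps $\czz(\ome{k})$ onto the finitely supported vectors supported in $\tau^{k+1}[(0)]$, whose closure is exactly $T^{k+1}[(0)]$. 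As the isometric image of a complete space, this range is automatically closed and equals $T^{k+1}[(0)]$. The only point requiring any care — and the place where a careless argument could slip — is this last identification of the range, together with the observation that no surjectivity onto all of $T_{k+1}(d,\theta)$ is being claimed.
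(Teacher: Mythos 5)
Your proposal is correct and follows essentially the same route as the paper, whose entire proof consists of combining the two preceding corollaries; you have merely made explicit the routine density extension and the (correct) observation that the isometry is onto its image $T^{k+1}[(0)]$ rather than onto all of $T_{k+1}(d,\theta)$, which is consistent with the paper's Corollary~\ref{charSubspaces} and with Theorem~\ref{isomorphismThm1}.
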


Iterating this theorem, and using the notation of the beginning of Section \ref{sec.subspaceslNinfty}
we describe isometrically all subspaces of $T_k(d,\theta)$ of the form $T_k[s]$ for $s\in\omle{k}$ and $|s|<k$.

\begin{cor}\label{charSubspaces}
Suppose that $s\in\omle{k}$ with $|s|<k$.  Then $T_k[s]\subset T_k(d,\theta)$ is isometrically isomorphic to $T_{k-|s|}(d,\theta)$.
\end{cor}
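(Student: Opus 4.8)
The plan is to read Theorem~\ref{thm.isoembd} as the assertion that the map $\Phi$, which prepends a $0$, is an isometric isomorphism of $T_k(d,\theta)$ onto its closed range, and that this range is precisely the subtree space $T^{k+1}[(0)]$: indeed $\Phi$ carries $\ome{k}$ bijectively onto $\tau^{k+1}[(0)]=\{(0)^{\frown}v:v\in\ome{k}\}$. In this reading Theorem~\ref{thm.isoembd} already proves the corollary in the special case $s=(0)$, $|s|=1$. The general statement will follow by iterating this identification and then removing the restriction that the stem consist of zeros.

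First I would treat stems consisting entirely of zeros. Writing $0^l=(0,\dots,0)$ with $l$ entries, the composition of $l$ successive copies of $\Phi$,
\[
T_{k-l}(d,\theta)\xrightarrow{\ \Phi\ }T_{k-l+1}(d,\theta)\xrightarrow{\ \Phi\ }\cdots\xrightarrow{\ \Phi\ }T_{k}(d,\theta),
\]
is a composition of isometric isomorphisms and hence is itself an isometry. Prepending $l$ zeros sends $\ome{k-l}$ onto $\tau^k[0^l]$, so the range of this composite is exactly $T^k[0^l]$. Consequently $T^k[0^l]$ is isometrically isomorphic to $T_{k-l}(d,\theta)$, with no additional computation required.

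Next I would reduce an arbitrary stem to the all-zeros stem of the same length. Given $s=(a_1,\dots,a_l)$ with $l<k$, consider the relabelling $\sigma:\tau^k[s]\to\tau^k[0^l]$ that resets the stem to zeros and shifts the free coordinates down by $a_l$, namely $\sigma(a_1,\dots,a_l,n_{l+1},\dots,n_k)=(0,\dots,0,n_{l+1}-a_l,\dots,n_k-a_l)$. This is a bijection, and since the comparison of two members of a single $\tau^k[\,\cdot\,]$ under $\prec$ reduces to comparing their last coordinates and then lexicographically comparing their free parts, $\sigma$ preserves both $\prec$ and the successor relation $<$. The crucial point is that $\sigma$ should also match the admissible structures: one checks, using the explicit description of $X_v^{\max}$ in Lemma~\ref{lemElemXvmax} together with Corollary~\ref{cor1LemElemXvmax}, that the traces $E\cap\tau^k[s]$ of members $E\in\mathcal{AR}^k$ correspond under $\sigma$ to the traces $E'\cap\tau^k[0^l]$, and that these traces are all that enters the norm of a vector supported in the subtree. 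Granting this, $\sigma$ induces a linear isometry $T^k[s]\to T^k[0^l]$, and composing with the isometry of the previous step yields $T^k[s]\cong T_{k-l}(d,\theta)$.

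The main obstacle is precisely the admissibility-matching in this last step. In the zero-stem case the argument is clean because any $E\in\mathcal{AR}^k$ meeting $\tau^{k+1}[(0)]$ automatically has its $\prec$-minimum inside that subtree, which is exactly what drives Lemmas~\ref{supperApprox} and~\ref{lemReverse}; by contrast, for a stem with nonzero entries an approximation can meet $\tau^k[s]$ while its global $\prec$-minimum lies outside it. One must therefore verify carefully that only the trace $E\cap\tau^k[s]$ contributes to the norm on $T^k[s]$ and that the family of such traces is genuinely carried by $\sigma$ onto the corresponding family for $0^l$. I expect this to follow from the homogeneity of the high-dimensional Ellentuck construction as encoded in Lemma~\ref{lemElemXvmax} and Corollary~\ref{cor1LemElemXvmax}, but it is the delicate point and the one I would write out in full detail.
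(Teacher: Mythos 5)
Your zero-stem step is exactly the paper's argument: the proof of Corollary \ref{charSubspaces} given in the paper is literally ``iterate Theorem \ref{thm.isoembd}'', which yields $T^k[0^l]$ isometric to $T_{k-l}(d,\theta)$. The entire remaining content of the corollary for a general stem $s=(a_1,\dots,a_l)$ is the passage from $\tau^k[s]$ to $\tau^k[0^l]$, and there your proposal has a genuine gap, which you yourself flag: the claim that the relabelling $\sigma$ matches the admissible structures is stated as an expectation, not proved, and the tools you cite do not cover both inequalities. Lemma \ref{lemElemXvmax} does give one direction: the analogue of Lemma \ref{supperApprox} for the map $v\mapsto s^{\frown}(v_1+a_l,\dots,v_{k-l}+a_l)$ goes through verbatim, because membership in $X_v^{\max}$ is characterized coordinatewise and is stable under prepending the stem and shifting; this shows the norm does not decrease when passing from $T_{k-l}(d,\theta)$ into $T^k[s]$. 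What is missing is the analogue of Lemma \ref{lemReverse}: given an admissible sequence $(F_i)$ in $\mcAR^k$, one must produce an admissible sequence in $\mcAR^{k-l}$ whose members contain the pulled-back traces of the $F_i\cap\tau^k[s]$. The paper's proof of Lemma \ref{lemReverse} uses precisely the feature you correctly identify as special to the zero stem --- any $F\in\mcAR^{k+1}$ meeting $\tau^{k+1}[(0)]$ has its $\prec$-minimum inside that cone --- and this fails once $a_1>0$; Corollary \ref{cor1LemElemXvmax} does not repair it.

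The gap is closeable, but by a structural observation about $\mcE_k$-trees rather than by Lemma \ref{lemElemXvmax}: if $F=r_n(Y)$ meets $\tau^k[s]$, then $s$ lies in the range of $\widehat Y$; the portion of $Y$ above $s$, re-coordinatized by deleting the stem and subtracting $a_l$, is induced by an $\mcE_{k-l}$-tree (compose $\widehat Y$ with the order-preserving identifications of $\omle{k-l}$ with the cone above $\widehat{Y}^{-1}(s)$ and of the cone above $s$ with $\omle{k-l}$); and $F\cap\tau^k[s]$ is a $\prec$-initial segment of $Y\cap\tau^k[s]$ because $r_n(Y)$ is a $\prec$-initial segment of $Y$. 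Hence each pulled-back trace lies in $\mcAR^{k-l}$, and successiveness is inherited since $\sigma$ preserves $\prec$; combining with the easy direction gives the isometry. With that lemma written out, your plan proves the corollary; without it, the case $a_1>0$ is not established. In fairness, the paper's own one-line proof elides exactly the same point, so your proposal is more honest about where the work lies, but as submitted it is not a complete proof.
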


\begin{rem}
Another way of embedding $T_k(d,\theta)$ into $T_{k+1}(d,\theta)$ is by sending each member $s=(s_1,\dots,s_k)\in\ome{k}$ to $\Psi(s)=(s_1,\dots,s_k,s_k)\in\ome{k+1}$.
One can check that $\Psi$ maps $T_k(d,\theta)$ isometrically into $T_{k+1}(d,\theta)$.
In fact,  for each $k_1<k_2$,
there are infinitely many different ways of embedding $\mathcal{E}_{k_1}$ into $\mathcal{E}_{k_2}$,
and  each one of these.
\end{rem}


\section{The Banach space $T(\mcA_d^k,\theta)$  }\label{sec.9}

There are 
several natural ways of constructing 
 Banach spaces
 using the Tsirelson construction and $\mathcal{AR}^k$ on high dimensional Ellentuck spaces, depending on how one interprets what the natural generalizations of the finite sets in the constructions over the Ellentuck space should be.
Finite sets can be interpreted to  be simply finite sets or they can be interpreted  as finite approximations to members of the Ellentuck space.
Thus, in   building Banach spaces on $\mathcal{E}_k$,
the admissible sets can be chosen to be either finite sets or members of $\mathcal{AR}^k$,
and likewise, the admissible sets can either be 
simply increasing 
 or required  to have a member of $\mathcal{AR}^k$ separating them.

In this section, we produce the most complex of the natural  constructions of   Banach spaces
using the Tsirelson methods
 on the high dimensional Ellentuck spaces.
We denote these spaces as $T(\mcA_d^k,\theta)$.
In $T_k(d,\theta)$ we said that $\{E_i:i\leq m\}\subset\mathcal{AR}^k$ is $d$-admissible if $m\leq d$ and
$E_1<E_2<\cdots <E_m$.  The $E_i$'s were linearly ordered, and this was indicated by the index $d$.  
In $T(\mcA_d^k,\theta)$, the order of the $E_i$'s will be determined by elements of $\mathcal{AR}_d^k$,
which leads to the definition below.

The Banach space $T(\mcA_d^k,\theta)$ shares many properties with the Banach space $T_k(d,\theta)$, their proofs being
 very similar.  In this section we indicate what parts of the proofs from previous sections for  $T_k(d,\theta)$ apply also to
$T(\mcA_d^k,\theta)$, and we provide the proofs that are different.  In particular we have the following:
(1) For $k\geq2$, $T(\mcA_d^k,\theta)$ has arbitrary large copies of $\ell_\infty^n$ with uniform isomorphism constant.
(2) $T(\mcA_d^k,\theta)$  is $\ell_p$-saturated.
(3) If $j\not= k$, then $T(\mathcal{A}_d^{j},\theta)$ and $T(\mathcal{A}_d^{k},\theta)$ are not isomorphic. 
We don't know if  $T(\mcA_d^j,\theta)$ embeds into  $T(\mathcal{A}_d^{k},\theta)$
 for $j<k$.

\bdefn \label{defAdmissibleEndpoint}
Set $\mcA_d^k := \bigcup_{i=1}^d \mcAR_i^k$ and let $m \in \set{1,2,\ldots,d}$. We say that
$(E_i)_{i=1}^m \subset \mcAR^k$ is $\mcA_d^k$\textit{-admissible} 
if and only if there exists $\set{v_1,v_2,\ldots,v_m} \in \mcA_d^k$ such that $v_1 \leq E_1 < v_2
\leq E_2 < \cdots < v_m \leq E_m$.
\edefn

\begin{figure}[H]
\includegraphics[scale=0.23]{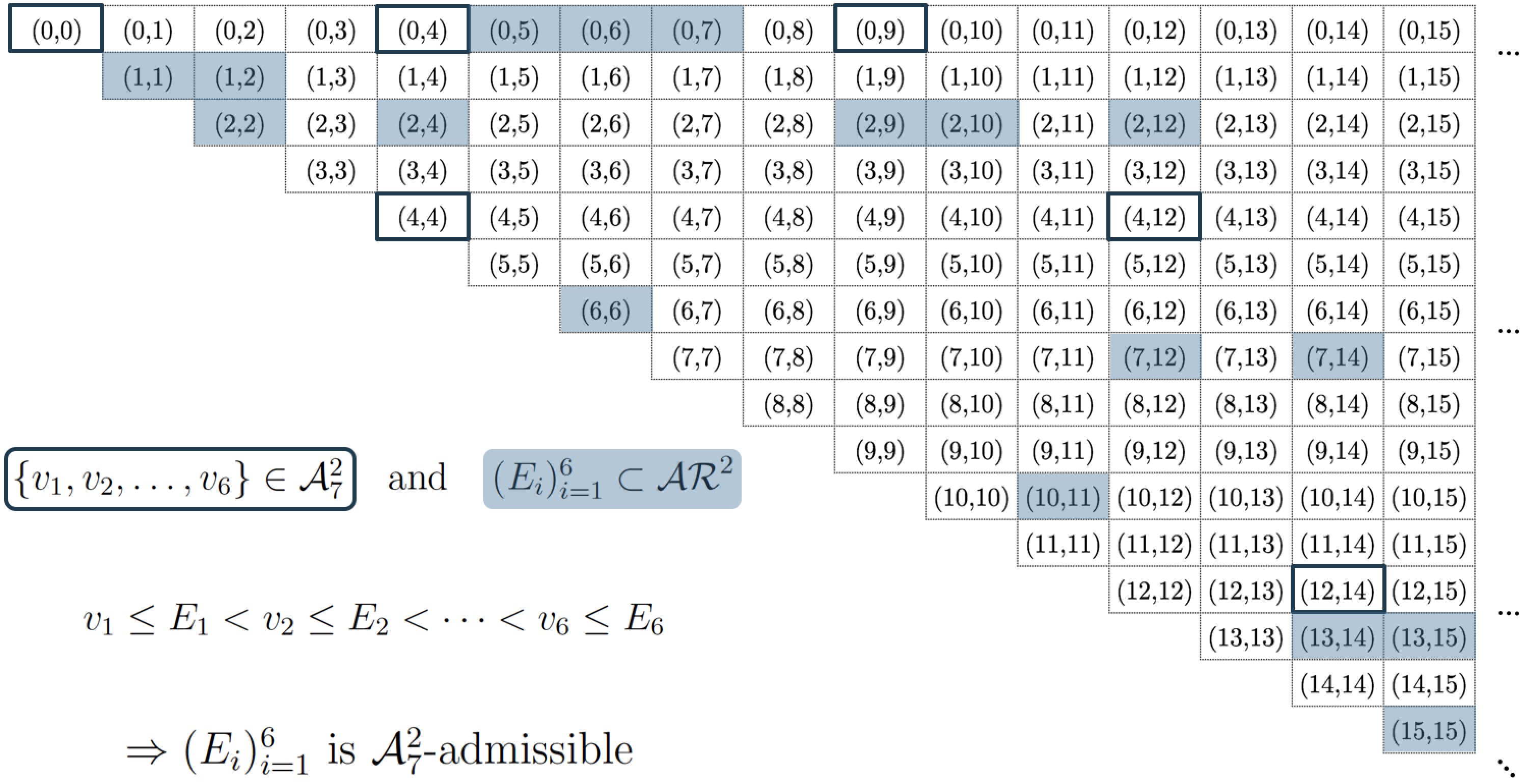}
\captionsetup{singlelinecheck=on}
\caption{An $\mathcal{A}_7^2$-admissible sequence}
\end{figure}

Following the construction of Section \ref{sec.Bconstruction}, we define a sequence of norms on $c_{00}(\ome{k})$ by 
$\abs{x}_0 := \max_{n \in \Zp} \abs{x_n}$ and 
$$ \abs{x}_{j+1} := \max \left \{ \abs{x}_j, \theta \max \left
\{ \sum_{i=1}^{m} \abs{E_i x}_j : 1 \leq m \leq d, (E_i)_{i=1}^{m} \;\mathcal{A}_d^k\textit{-admissible}
\right \} \right \},$$
and we define $T(\mcA_d^k,\theta)$ to be the completion of $c_{00}(\ome{k})$ with norm 
$$\norm{x}_{T(\mcA_d^k,\theta)}=\sup_j\abs{x}_j.$$
It follows that $T(\mcA_d^k,\theta)$ is 1-unconditional and 
$$
\norm{x}_{T(\mcA_d^k,\theta)} = \max\left\{  \norm{x}_\infty, \theta \sup \sum_{i=1}^{m} \norm{E_i x}_{T(\mcA_d^k,\theta)}\right\}
$$
where the supremum runs over all $\mathcal{A}_d^k$-admissible families $\{E_1,\dots,E_m\}$.

\vspace{3pt}

We start with the results of Section \ref{sec.subspaceslNinfty}.  
For $s\in\oml{k}$, define 
$T[\mathcal{A}_d^k, s]=\overline{\text{span}}\{e_v:v\in \tau^k[s]\}.$  Since 
Corollary \ref{cor3LemElemXvmax}, Lemma \ref{lemEcapTau}, and Lemma \ref{lemInitialSegment} depend only on
properties of $\mathcal{AR}^k$, and since these are the results used in the proof of Theorem \ref{thmSubspaceslinfty},
we obtain the following result:

\bthm \label{thmSubspaceslinfty2}
Suppose that $s_1\prec s_2\prec \dots \prec s_N$ belong to $\oml{k}$ and that $|s_1|=\cdots=|s_N|<k$.
Let $v\in\ome{k}$ with $s_N\prec v$ and suppose that $x\in \sum_{i=1}^N\oplus T[\mathcal{A}_d^k,s_i]$ satifies $v<x$.  
If we decompose $x$ as $x_1+\cdots+x_N$ with $x_i \in T[\mathcal{A}_d^k,s_i]$, then
\vspace{1pt}
\beqs
\max_{1 \leq i \leq N} \norm{x_i} \leq \norm{x} \leq \frac{\theta(d-1)}{1-\theta}
\max_{1 \leq i \leq N} \norm{x_i}.
\eeqs
In particular, if $\norm{x_1} = \cdots = \norm{x_N} = 1$, $\hbox{span}\{x_1,\dots,x_N\}$ is
isomorphic to $\ell_\infty^N$ in a canonical way and the isomorphism constant is independent of $N$ and of the $x_i$'s.
\ethm 

\vspace{3pt}

We now move to the results of Section \ref{sec.blocksubspaces}.  The main result is the same but the proof for the lower $\ell_p$-estimate 
is different.

\bthm \label{thmBlockSubspaceslp2}
Suppose that $(x_i)_{i=1}^\infty$ is a normalized block sequence in $T(\mcA_d^k,\theta)$ and that we
can find a sequence $(v_i)_{i=1}^\infty \subset \ome{k}$ such that:
\begin{enumerate}
\item 
$v_1\leq x_1<v_2\leq x_2<v_3\leq x_3<v_4\leq x_4<\cdots$

\item
$\supp{x_i} \subset X_{v_i}^{\max}$ and $v_{i+1} \in X_{v_i}^{\max}$ for every $i\geq1$.

\end{enumerate}
Then, $(x_i)$ is equivalent to the basis of $\ell_p$.
\ethm

We start with the upper $\ell_p$-estimate.  The construction of the Alternative Norm (see Subsection \ref{alternativeNorm}) is almost identical.
The main difference is the definition of almost admissible sequences.

\bdefn \label{defAlmostAdmissible}
Let $m \in \set{1,\ldots,d}$. A sequence $(F_i)_{i=1}^m \subset \fin(\ome{k})$ is called 
$\mathcal{A}_d^k$\textit{-almost admissible} if there exists an $\mcA_d^k$-admissible sequence $(E_n)_{n=1}^d$ such
that $F_i \subseteq E_{n_i}$, where $n_1,\ldots,n_m \in \set{1,\ldots,d}$ are such that $n_1 < n_2
< \cdots < n_m$.
\edefn

This results in an alternative/dual description of the norm of $T(\mcA_d^k,\theta)$.  The sets $K_n$ are smaller
than the corresponding sets for $T_k(d,\theta)$, but the proofs are identical, resulting in the upper $\ell_p$-estimate identical to
Proposition \ref{propUpperlpEstimate}.

The lower $\ell_p$-estimate is harder, because we have fewer admissible sequences than in $T_k(d,\theta)$.  We need to have 
enough ``room'' to find $\mathcal{A}_d^k$-admissible sequences, since we need to place the sets between an element of $\mathcal{A}_d^k$.
To do so, we will prove a lower $\ell_p$-estimate for the sequences $(x_{2n})$ and $(x_{2n-1})$.  Since the
closed span of $(x_{2n})$ and $(x_{2n+1})$ are complemented in the closed span of $(x_n)$, the
general result follows. We will obtain the estimate for $(x_{2n})$. The other case is similar. 

We start with the following  lemma.

\blem \label{lemPrescribedX}
Suppose that $(q_i)_{i=1}^\infty \subset \mbN$ is such that $q_1 < q_2 < \cdots$. Then, there
exists $X = \set{w_1, w_2, \ldots} \in \mcE_k$ such that for every $i \in \Zp, \max(w_i) = q_i$
and  $w_i$ is a sequence all of whose terms are in the set 
$\set{q_1, q_2, \ldots}$.
\elem

\bpf
Following Definition \ref{defE_k} we will construct inductively an $\mcE_k$-tree $\widehat{X}$
that determines $X$. 
Recall that for fixed $k\ge 2$,
the first $k$ members of $(\omle{k},\prec)$ are
$\vec{s}_0=()$, $\vec{s}_1=(0)$, and in general, for $1\le k'\le k$, $\vec{s}_{k'}$ is the sequence of $0$'s of length $k'$.
Begin by  setting  $\widehat{X}(\vec{s}_0):=()$,
and for each $1\le k'\le k$,
set  $\widehat{X}(\vec{s}_{k'})$ to be the sequence of length $k'$ with all entries being $q_1$.
Note that $\vec{u}_0$ is the sequence of $0$'s of length $k$, and that $\max (\widehat{X}(\vec{u}_0))=q_0$.

Suppose we have defined
$\widehat{X}(\vec{s}_{m'})$ for all $m'\le m$ 
so that whenever   $\vec{s}_{m'}=\vec{u}_j$ for some $j$,
then $\max(\widehat{X}(\vec{u}_j))=q_j$.
Define $\widehat{X}(\vec{s}_{m+1})$ based on the following cases:

\vspace{5pt}
\noindent \underline{Case 1:} $\abs{\vec{s}_{m+1}} = \abs{\vec{s}_m} + 1$.
Letting $ (n_1, \ldots, n_{\abs{\vec{s}_{m}}})$ denote $\widehat{X}(\vec{s}_m)$, 
set
$\widehat{X}(\vec{s}_{m+1}) := (n_1, \ldots, n_{\abs{\vec{s}_{m}}}, n_{\abs{\vec{s}_{m}}})$.

\vspace{5pt}
\noindent \underline{Case 2:}
$\abs{\vec{s}_m} = k$. 
Let $j$ be the index such that $\vec{s}_m=\vec{u}_j$.
By the induction hypothesis, 
$\max(\widehat{X}(\vec{s}_m))=q_j$.
Take $m'$ to be the index such that 
$\vec{s}_{m'}\sqsubset \vec{s}_{m+1}$
 with $|\vec{s}_{m'}|=|\vec{s}_{m+1}|-1$.
Then $\widehat{X}$ is already defined on $\vec{s}_{m'}$,
so define 
 $\widehat{X}(\vec{s}_{m+1}):={\widehat{X}(\vec{s}_{m'})}^{\frown}q_{j+1}$.
\end{proof}

\blem \label{lemEandF}
Assume that all the hypotheses of Theorem \ref{thmBlockSubspaceslp2} are satisfied.
If $m \in \set{1,\ldots,d}$ and $E_1 < E_2 < \cdots < E_m$ are finite subsets of $\Zp$, then there
exists an $\mcA_d^k$-admissible sequence $(F_i)_{i=1}^m \subset \mcAR^k$ such that
\vspace{4pt}
\beqs
E_i \subset \left\{j \in \Zp: \supp{x_{2j}} \subseteq F_i\right\}.
\eeqs
\elem

\bpf
For $i \in \set{1,\ldots,m}$, set $n_i := \min(E_i)$.  It is helpful to
keep the following picture in mind throughout this proof:
\vspace{2pt}
\beqs
\cdots \leq x_{2(n_i-1)} < v_{2n_i-1} \leq x_{2n_i-1} < v_{2n_i} \leq x_{2n_i} < \cdots \leq
x_{2(n_{i+1}-1)} < \cdots.
\eeqs

Set $q_i := \max(v_{2n_i-1})$. By hypothesis (1) of Theorem \ref{thmBlockSubspaceslp2}, it follows that 
$q_1 < q_2 < \cdots < q_m$.  Applying Lemma \ref{lemPrescribedX}, we can find
$\{w_1,w_2,\ldots,w_m\}$ in $\mcAR_d^k$ such that $\max(w_i) = q_i$ and all the terms of $w_i$ are
in $\set{q_1, q_2, \ldots, q_m}$. Consequently,
\beq \label{eqwj}
\cdots \leq x_{2(n_i-1)} < w_i \prec v_{2n_i} \leq x_{2n_i} < \cdots.
\eeq

By hypothesis, 
$X_{v_{2n_i}}^{\max}$ contains the support of $x_j$ for any $j \geq 2n_i$. Hence we define:
\beq \label{eqFj}
F_i=\{ w\in X_{v_{2n_i}}^{\max} : w\prec v_{2n_{i+1}}\}
\eeq
as the initial segment of $X_{v_{2n_i}}^{\max}$ up to (but not including) $v_{2n_{i+1}}$.  
By construction, $F_i \in \mcAR^k$, $\minprec(F_i) = v_{2n_i}$, and $F_i$ contains
the supports of 
$$x_{2n_i},x_{2(n_i+1)},x_{2(n_i+2)},\dots, x_{2(n_{i+1}-1)}.$$
Thus, from equation
(\ref{eqwj}) and (\ref{eqFj}), we have:
\vspace{3pt}
\beqs
w_1 \prec v_{2n_1} \leq F_1 < w_2 \prec v_{2n_2} \leq F_2 < \cdots \leq F_{m-1} < w_m \prec
v_{2n_m} \leq F_m,
\eeqs 
and we conclude that $F_1,F_2,\ldots,F_m \in \mcAR^k$ is the desired $\mathcal{A}_d^k$-admissible sequence.
\epf

With this, the proof of Proposition \ref{propLowerlpEstimate} applies and we obtain the lower $\ell_p$-estimate
for the normalized block sequence $(x_{2n})$.  A similar argument gives a lower $\ell_p$-estimate for the 
normalized block sequence $(x_{2n-1})$ and we conclude the sketch of the proof of Theorem \ref{thmBlockSubspaceslp2}.

\medbreak

Since Theorems \ref{thmSubspaceslinfty2} and \ref{thmBlockSubspaceslp2} hold for $T(\mcA_d^k,\theta)$, 
we have all the elements to show that the different $T(\mcA_d^k,\theta)$'s are not isomorphic to each other.
The proof of Theorem \ref{isomorphismThm1} applies and we obtain the following:

\bthm \label{isomorphismThm2}
If $k_1 \not= k_2$, then $T(\mcA_d^{k_1},\theta)$ is not isomorphic to $T(\mcA_d^{k_2},\theta)$.
\ethm


\section{Further Directions}

In this paper, we considered two different means of constructing norms on high dimensional Ellentuck spaces.
 One required both the admissible sets and their endpoints to be finite approximations to members of $\mathcal{E}_k$, and the other only required the admissible sets to be finite approximations.
Theorems \ref{isomorphismThm1} and \ref{thm.isoembd}
show 
that this latter norm construction produces a hierarchy of Banach spaces $T_k(d,\theta)$ which embed isometrically as subspaces into Banach spaces constructed from  higher order Ellentuck spaces.

\begin{question}
For fixed $d$ and $\theta$ and $k_1<k_2$, does $T(\mathcal{A}^{k_1}_d,\theta)$ embed as an isometric subspace of $T(\mathcal{A}^{k_2}_d,\theta)$?
\end{question}

Preliminary analysis shows that if we let $d'$ be sufficiently greater than $d$,
we can show that the norm on $T(\mathcal{A}^{k_1}_d,\theta)$ is bounded by the norm on the trace subspace above $(0)$ in $T(\mathcal{A}^{k_2}_{d'},\theta)$,
where $d'$ is computed from $d$ in a straightforward manner using methods from \cite{DobrinenJSL15}.
However, we have not checked whether or not this produces and isometric subspace.

\begin{question}
For fixed $d,\theta,k$ how different are $T_k(d,\theta)$ and $T(\mathcal{A}^k_d,\theta)$?
\end{question}

We finish with some questions about the behaviors of norms over certain sequences of these spaces.

\begin{question}
What is the behavior of the norms on $T_k(\mathcal{B},\theta)$, constructed  using  barriers $\mathcal{B}$ on $\mathcal{E}_k$ of infinite rank?
\end{question}

Finally, we ask about Banach spaces constructed on infinite dimensional Ellentuck spaces from \cite{DobrinenIDE15}.

\begin{question}
What new properties of the sequence of Banach spaces emerge as we construct $T_{\al}(d,\theta)$, where $\al$ is any countable ordinal and $\mathcal{E}_{\al}$ is the $\al$-dimensional Ellentuck space?
\end{question}

As the spaces $T_k(d,\theta)$ were shown to extend the $\ell_p$ space into a natural and  very structured hierarchy,
it will be interesting to see what properties emerge in these  classes of new Banach spaces.

\end{document}